\documentclass[11pt,twoside]{article}
\topmargin 0in \oddsidemargin 0.2in \evensidemargin 0.2in \textwidth
6.3in \textheight 8.27in
\usepackage{amsmath,amssymb,epsfig,subfigure,graphicx,epstopdf,color}
\usepackage{multirow}
\usepackage{algorithm}
\usepackage{algorithmic}
\usepackage[titletoc,title]{appendix}
\newtheorem{proposition}{Proposition}[section]
\newtheorem{theorem}[proposition]{Theorem}
\newtheorem{lemma}[proposition]{Lemma}

\newtheorem{definition}[proposition]{Definition}
\newtheorem{remark}[proposition]{Remark}
\newtheorem{example}[proposition]{Example}

\newenvironment{proof}{{\noindent \em Proof.}}{\hfill $\fbox{}$ \vspace*{5mm}}
\numberwithin{equation}{section}

\newcommand{\ba}{{\bf a}}
\newcommand{\bb}{{\bf b}}
\newcommand{\bc}{{\bf c}}
\newcommand{\bfd}{{\bf d}}

\newcommand{\bfo}{{\bf 1}}

\newcommand{\bn}{{\bf n}}

\newcommand{\bq}{{\bf q}}
\newcommand{\br}{{\bf r}}

\newcommand{\bx}{{\bf x}}
\newcommand{\by}{{\bf y}}
\newcommand{\bz}{{\bf z}}

\newcommand{\bv}{{\bf v}}
\newcommand{\bw}{{\bf w}}

\newcommand{\cb}{{\cal B}}
\newcommand{\cc}{{\cal C}}
\newcommand{\cd}{{\cal D}}

\newcommand{\ck}{{\cal K}}
\newcommand{\cl}{{\cal L}}

\newcommand{\ob}{{\cal OB}}

\newcommand{\cs}{{\cal S}}

\newcommand{\la}{\lambda}

\newcommand{\E}{{\mathbb{E}}}

\newcommand{\R}{{\mathbb{R}}}
\newcommand{\Rm}{{\mathbb{R}^m}}
\newcommand{\Rn}{{\mathbb{R}^n}}

\newcommand{\Rmn}{{\mathbb{R}^{m\times n}}}
\newcommand{\Rnn}{{\mathbb{R}^{n\times n}}}
\newcommand{\Rnr}{{\mathbb{R}^{n\times r}}}

\newcommand{\diag}{{\rm diag}}

\newcommand{\dist}{{\rm dist}}
\newcommand{\dom}{{\rm dom}}

\newcommand{\prox}{{\rm prox}}

\newcommand{\sign}{{\rm sign}}

\newcommand{\BE}{\begin{equation}}
\newcommand{\EE}{\end{equation}}

\DeclareMathOperator*{\argmin}{argmin}
\newcommand{\normmm}[1]{{\vert\kern-0.25ex \vert\kern-0.25ex \vert #1
    \vert\kern-0.25ex \vert\kern-0.25ex\vert}}


\begin{document}
\title{\bf A Geometric Proximal Gradient Method for Sparse Least Squares Regression with Probabilistic
Simplex Constraint}
\author{Guiyun Xiao\thanks{School of Mathematical Sciences, Xiamen University, Xiamen 361005, People's Republic of China  (xiaogy999@163.com).}
\and Zheng-Jian Bai\thanks{Corresponding author. School of Mathematical Sciences and Fujian Provincial Key Laboratory on Mathematical Modeling \& High Performance Scientific Computing,  Xiamen University, Xiamen 361005, People's Republic of China (zjbai@xmu.edu.cn). }
}
\maketitle
\begin{abstract}
In this paper, we consider the sparse least squares regression problem with probabilistic
simplex constraint. Due to the probabilistic simplex constraint, one could not apply the $\ell_1$ regularization to the considered regression model. To find a sparse solution, we reformulate the  least squares regression problem as a nonconvex and nonsmooth $\ell_1$ regularized  minimization problem over the unit sphere. Then we propose a geometric proximal gradient method for solving the regularized problem,  where the explicit expression of the global solution to every involved subproblem is obtained. The global convergence of the proposed method is established under some mild assumptions.  Some numerical results are reported  to  illustrate the effectiveness of the proposed algorithm.
\end{abstract}

\vspace{3mm}
{\bf Keywords.} Sparse least squares regression, probabilistic
simplex constraint, $\ell_1$ regularization, geometric proximal gradient method

\vspace{3mm}
{\bf AMS subject classifications.} 65K05,  	90C25, 90C26

\section{Introduction}\label{sec1}
In this paper, we focus on the solution of  the following least squares regression with probabilistic
simplex constraint:
\BE\label{intro}
\begin{array}{lc}
\min\limits_{\bx\in\Rn}  &  \displaystyle \frac{1}{2}\|A\bx-\bb\|^2 \\[2mm]
\mbox{subject to (s.t.)} &{\bf 1}_n^T\bx=1, \\[2mm]
&  \bx\ge {\bf 0}
 \end{array}
\EE
for some sparse $\bx\in\Rn$,
where $A\in\Rmn$, ${\bf 0}\in\Rn$ is a zero vector, ${\bf 1}_n\in\Rn$ is a vector of all ones, and $\bx\ge {\bf 0}$ means that $\bx$ is a entry-wise nonnegative vector. Such problem arises in various applications such as the construction of probabilistic Boolean networks \cite{CJ12}, nonparametric distribution estimation \cite[\S7.2]{BV04}, and sparse hyperspectral unmixing \cite{IB09}, etc.

There exist various numerical methods for solving the optimzation  model related to problem \eqref{intro}. For example, Iordache et al. \cite{IB09} introduced the Moore-Penrose pseudoinverse method, the orthogonal matching pursuit method, the iterative spectral mixture analysis method, and the $\ell_2$--$\ell_1$ sparse regression technique. Bioucas-Dias and Figueiredo \cite{BF10} presented the alternating direction method of multipliers (ADMM) algorithm  for solving the following minimization  problem:
\BE\label{pro:mp}
\min\limits_{\bx\in\Rn}   \frac{1}{2}\|A\bx-\bb\|^2 +\la \|\bx\|_1 +\chi_{\{1\}}({\bf 1}^T\bx)+\chi_{\R_+^n}(\bx),
\EE
where $\la>0$ is a regularized parameter, $\R_+^n$ denotes the nonnegative orthant of $\Rn$ ($\R_+=\R_+^1$), and $\chi_\cd$ is a characteristic function of  a set $\cd\subset\Rn$ defined by
\[
\chi_\cd(x)=\left\{
\begin{array}{cl}
0,         &   x\in\cd,\\
+\infty,  & \mbox{otherwise}.
\end{array} \right.
\]
Salehani et al. \cite{SG14} provided  the ADMM method for solving the following regularized constrained sparse regression:
\[
\begin{array}{lc}
\min\limits_{\bx\in\Rn}  &  \displaystyle \frac{1}{2}\|A\bx-\bb\|^2+\la\Omega(\delta,\bx) \\[2mm]
\mbox{s.t.} &{\bf 1}_n^T\bx=1, \quad \bx\ge {\bf 0},
 \end{array}
\]
where $\la>0$ is a regularized parameter and $\Omega(\delta,\bx):=\sum_{j=1}^{n}\frac{2}{\pi}\arctan(\frac{x_j}{\delta})$ with $0< \delta<1$ is an arctan function, which is used to approximate $\ell_0$ norm since $\lim_{\delta\to 0}\Omega(\delta,\bx)=\|\bx\|_0$.
In \cite{CC11,CJ12}, Chen et al. gave a (generalized) maximum entropy rate  method for solving problem \eqref{intro}.   In \cite{WW15}, Wen et al. presented
a projection-based gradient descent method for solving problem \eqref{intro}.  In \cite{DP19},  Deng et al. provided a partial proximal-type operator splitting method   for solving the $\ell_{1/2}$ regularization version of problem  \eqref{intro}:
\[
\begin{array}{lc}
\min\limits_{\bx\in\Rn}  &  \displaystyle \frac{1}{2}\|A\bx-\bb\|^2 +\lambda \|\bx\|_{1/2}^{1/2} \\
\mbox{s.t.} &\bfo_n^T \bx=1,\quad \bx\ge {\bf 0},
 \end{array}
\]
where $\la>0$ is a regularized parameter.

In this paper, we first reformulate the  regression model \eqref{intro} as an nonconvex minimization problem over the unit sphere. To find a sparse solution, we add the $\ell_1$-penalty to the nonconvex minimization problem. Then we introduce a geometric proximal gradient method for solving the regularized problem. This is motivated by the book due to Hastie et al. \cite{HT15} and the paper due to Bolte et al. \cite{BS14}. As noted in \cite{HT15}, the lasso or $\ell_1$ regularization is widely employed for learning the sparsity of the  regression parameters $\{x_i\}$ in  high-dimensional linear regression and inverse problems. In \cite{BS14}, Bolte et al.  presented a proximal alternating linearized minimization algorithm for solving nonconvex and nonsmooth optimization problems, where, in each iteration, one only need to apply the proximal forward-backward scheme to minimizing the sum of a smooth function with a nonsomooth one. However, in  each iteration of the proposed method, we need to use the proximal mapping to the nonconvex and nonsmooth minimization of the sum of a smooth function with the $\ell_1$ regularization item over the unit sphere, which gives a challenge since it is hard to simplify the proximal mapping as the projection onto a closed set. We shall analyze the special property of the involved  proximal mapping and give the explicit expression of a global minimizer of each involved subproblem. By using the  Kurdyka-{\L}ojasiewicz property defined in  \cite{BS14}, the global convergence of the proposed method is established and  we also show that each sequence generated by our method converges to a critical point of some regularized function under some mild assumptions.  Some numerical results are reported  to  illustrate the effectiveness of the proposed method.

Throughout this paper, we use the following notation. Let $\Rmn$ be the set of all $m\times n$ real matrices and $\Rn=\R^{n\times 1}$. Let $\Rn$ be equipped with the Euclidean inner product $\langle \cdot,\cdot\rangle$ and its induced norm $\|\cdot\|$. Let $\Rmn$ be equipped with the Frobenius inner product $\langle \cdot,\cdot\rangle_F$ and its induced Frobenius norm $\|\cdot\|_F$. The superscript ``$\cdot^T$" stands for the  transpose of a matrix or vector. $I_n$ denotes the identity matrix of order $n$. The symbol `$\odot$'  means the Hadamard product of  two vectors. Let $|\cdot|$ be the absolute value of a real number or the components of a real vector. Denote by $\|\cdot \|_0$ and $\|\cdot\|_2$ the number of nonzero entries of a vector or a matrix and the matrix $2$-norm, respectively. Denote by $\diag(C)$ a diagonal matrix with the same diagonal entries as a square matrix $C$. For any  matrix $C=(c_{ij})\in\Rmn$, let $\|C\|_1:=\sum_{i,j}|c_{ij}|$. For any  $\bc=(c_1,\ldots,c_n)^T\in\Rn$, let $\max(\bc,{\bf 0}):=(\max(c_1,0),\ldots,\max(c_n,0))^T$ and $\sign(\bc):=(\sign(c_1),\ldots,\sign(c_n))^T$, where $\sign(c_k)=1$ if $c_k>0$, $\sign(c_k)=-1$ if $c_k<0$ and $\sign(c_k)\in [-1,1]$ if $c_k=0$.

The rest of this paper is organized as follows. In Section \ref{sec2}, we reformulate the  sparse least squares regression problem with probabilistic simplex constraint as a $\ell_1$ regularized problem  over the unit sphere and present a geometric proximal gradient method for solving the regularized problem. In Section \ref{sec3}, we derive the explicit expression of the global minimizer of each involved subproblem and estalish the global convergence
of the proposed method under some mild assumptions. In Section \ref{sec4}, we discuss some extensions. In Section \ref{sec5}, we report some numerical tests to indicate the effectiveness of our method. Finally, some
concluding remarks are given in Section \ref{sec6}.

\section{A geometric proximal gradient method} \label{sec2}
In this section, we first reformulate problem \eqref{intro} as a nonconvex and nonsmooth minimization problem over the unit sphere. To find a sparse solution, we use the $\ell_1$-penalty to the nonconvex and nonsmooth minimization problem. Then we propose a geometric proximal gradient method  for solving the regularized problem.
\subsection{Reformulation}
To find a sparse solution, sparked by \cite{HT15}, it is desired to directly add $\ell_1$-penalty to problem \eqref{intro}, i.e.,
\[
\begin{array}{lc}
\min\limits_{\bx\in\Rn}  &  \displaystyle \frac{1}{2}\|A\bx-\bb\|^2 +\lambda \|\bx\|_1 \\
\mbox{s.t.} &\bfo_n^T \bx=1,\quad \bx\ge {\bf 0},
 \end{array}
\]
where $\la>0$ is a regularized parameter. However, the $\ell_1$ norm of $\bx$ is a constant since $\|\bx\|_1={\bf 1}_n^T\bx=1$ for all possible points in the feasible set:
\[
\ck:=\{\bx\in\Rn\;| \: {\bf 1}_n^T\bx=1,\;  \bx\ge {\bf 0}\}.
\]

In the following, we reformulate  problem \eqref{intro} as a nonconvex and nonsmooth minimization problem over the unit sphere. It is easy to see that the feasible set $\ck$ of problem \eqref{intro} can be written as
\[
\ck=\{\by\odot\by\;| \: \by^T\by=1,\; \by\in\Rn\}.
\]
We note that the unit sphere $\cs^{n-1}:=\{\by\in\Rn\;| \: \by^T\by=1\}$ is a Riemannian submanifold of $\Rn$ . Then problem \eqref{intro} is reduced to the following  minimization problem over the unit sphere:
\BE\label{pbn}
\begin{array}{lc}
\min\limits_{\by\in\Rn}  &  \displaystyle \frac{1}{2}\|A(\by\odot \by)-\bb\|^2\\[2mm]
\mbox{s.t.} &\by\in\cs^{n-1}
 \end{array}
\EE
We note that if $\by^\#\in\cs^{n-1}$ is a solution to problem \eqref{pbn}, then $\bx^\#:=\by^\#\odot \by^\#\in\ck$ is a solution to problem \eqref{intro}.

To find a sparse solution to  problem \eqref{intro}, by adding the $\ell_1$-penalty to problem \eqref{pbn}, we get the following $\ell_1$ regularized problem:
\BE\label{pbn-r}
\begin{array}{lc}
\min\limits_{\by\in\Rn}  &  \displaystyle \frac{1}{2}\|A(\by\odot \by)-\bb\|^2 +\lambda \|\by\|_1 \\[2mm]
\mbox{s.t.} &\by\in\cs^{n-1},
 \end{array}
\EE
where $\la>0$ is a regularized parameter.

We point out that problem  \eqref{pbn-r} is a nonconvex and nonsmooth minimization problem over the Riemannian manifold $\cs^{n-1}$. For simplicity, we define
\BE\label{def:fg}
f(\by):=\frac{1}{2}\|A(\by\odot\by)-\bb\|^2,\quad
g(\lambda,\by):=\lambda \|\by\|_1,\quad
F(\lambda,\by):=f(\by)+g(\lambda,\by)+\chi_{\cs^{n-1}}(\by).
\EE
Using Definition \ref{app:lsc}, we can easily check that $\chi_{\cs^{n-1}}$ is a proper lsc function on $\Rn$.

\subsection{Geometric proximal gradient method}
In this subsection, we propose a geometric proximal gradient method with the varied regularization parameter $\la$ for solving problem \eqref{pbn-r}.  As in \cite[p. 20]{RW98} and \cite{BS14}, for a proper lsc function $\phi:\Rn\to [-\infty, \infty]$ and a constant $\alpha>0$, the proximal mapping of $\phi$ is defined by
\[
\prox^{\phi}_t(\widehat{\by})=\argmin_{\by\in\Rn}\left\{\phi(\by)
+\frac{1}{2\alpha}\|\by-\widehat{\by}\|^2\right\}.
\]
For example, for the characteristic function $\chi_\cd$ of  a set $\cd\subset\Rn$, the proximal mapping of $\chi_\cd$ is given by
\[
\prox^{\chi_\cd}_t(\widehat{\by})=\argmin_{\by\in\cd}\|\by-\widehat{\by}\|,
\]
i.e., the projection onto $\cd$. In particular, if $\cd=\cs^{n-1}$, then the projection onto $\cs^{n-1}$ has a unique solution if $\widehat{\by}\neq {\bf 0}$ \cite{AM12}. To apply the proximal gradient method (see for instance \cite{B17}) to problem \eqref{pbn-r}, we need to use the proximal mapping to minimizing the sum of the linearization of the smooth function $f$ at the current iterate $\by^k$, the nonsmooth function $g(\la,\cdot)$, and the characteristic function $\chi_{\cs^{n-1}}$, i.e., the next iterate $\by^{k+1}$ is defined by
\[
\by^{k+1}=\argmin_{\by\in\cs^{n-1}}\left\{ f(\by^k)+\big\langle\by-\by^k , \nabla f(\by^k) \big\rangle +\la\|\by\|_1+\frac{1}{2\alpha}\|\by-\by^k\|^2
\right\},
\]
where $\alpha$ and $\la$  are two positive constants.

Based on the above analysis, our geometric proximal gradient algorithm for solving  problem \eqref{pbn-r}
can be described as follows.
\begin{algorithm}[!h]
\caption{Geometric proximal gradient method with varied regularized parameter} \label{pgm}
\begin{description}
\item [{\rm Step 0.}] Choose $\by^0\in\cs^{n-1}$, $\alpha_0>0$, $\lambda_0>0$, $\rho_1,\rho_2,\rho_3\in(0,1)$, $\gamma_1,\gamma_2>0$,  $\delta_1>0$, $\delta_2>0$. Let $k:=0$.
\item [{\rm Step 1.}] Take $\alpha=\alpha_0$ and compute
\BE\label{ques1}
\overline{\by}^{k}=\argmin_{\by\in\cs^{n-1}} \Big\{f(\by^k)+\big\langle\by-\by^k , \nabla f(\by^k) \big\rangle +\frac{1}{2\alpha}\|\by-\by^k\|^2+{\lambda_k}\|\by\|_1\Big\}.
\EE
\item  [{\rm Step 2.}]
{\bf Repeat} until $F(\la_k,\overline{\by}^{k})\le F(\la_k,\by^k) - \frac{1}{2} \gamma_2\|\overline{\by}^{k}-\by^k\|^2$

$\qquad$ Set $\alpha=\max\{\gamma_1,\alpha\rho_1\}$.

$\qquad$ if $F(\la_k,\overline{\by}^{k})> \delta_1 F(\la_k,\by^k)$, then set $\alpha=\max\{\gamma_1,\alpha\rho_2\}$.

$\qquad$ if $|F(\la_k,\overline{\by}^{k})-  F(\la_k,\by^k)| < \delta_2 F(\la_k,\by^k)$, then replace $\la_k$ by $\la_k\rho_3$.

$\qquad$ Compute
\[
\overline{\by}^{k}=\argmin_{\by\in\cs^{n-1}} \Big\{f(\by^k)+\big\langle\by-\by^k , \nabla f(\by^k) \big\rangle +\frac{1}{2\alpha}\|\by-\by^k\|^2+{\lambda_k }\|\by\|_1\Big\}.
\]
$\quad$ {\bf end (Repaeat)}

$\quad$ Set $\by^{k+1}:=\overline{\by}^{k}$, $\alpha_{k+1}:=\alpha$, and $\la_{k+1}:=\la_k$.

\item [{\rm Step 3.}]  Replace $k$ by $k+1$ and go to  Step 1.
\end{description}
\end{algorithm}

On Algorithm \ref{pgm}, we have the following remarks.
\begin{remark}
From the proof of Lemma \ref{lem:mon} below, we observe that the condition $F(\la_k,\overline{\by}^{k})\le F(\la_k,\by^k) - \frac{1}{2} \gamma_2\|\overline{\by}^{k}-\by^k\|^2$ holds if we fix $\alpha_{k+1}$ such that $\alpha_{k+1} \le 1/(L_f+\gamma_2)$, where $L_f$ is the Lipschitz constant as given in Lemma {\rm\ref{f:lc}}. However, this upper bound may be very small numerically, which is not necessary in practice. Therefore, in  Algorithm \ref{pgm}, we find a search stepsize $\alpha_{k+1}$ starting from a resonable large $\alpha_0>0$.
\end{remark}
\begin{remark}
In  Algorithm \ref{pgm}, we choose a regularization parameter $\la$ such that the relative error $|F(\la_{k+1},\by^{k+1})-  F(\la_k,\by^k)|/F(\la_k,\by^k)$ is not too small. From the latter numerical tests, we see that such adjustment of the regularization parameter may achieve a good  tradeoff between  sparsity and objective function value.
\end{remark}
%
%
%
%
\section{Convergence analysis}\label{sec3}
In this section, we discuss the global convergence of Algorithm \ref{pgm}. We first derive the explicit expression of the global minimizer of  the nonconvex and nonsmooth minimization  problem as defined in \eqref{ques1}. Then we show the propose method is globally convergent and the sequence $\{\by^k\}$ generated by Algorithm \ref{pgm} converges to a critical point of  problem  \eqref{pbn-r} with some regularization parameter under the assumption that  $\la_k$ is fixed for all $k$ sufficiently large.

\subsection{Explicit expression of $\overline{\by}^{k}$ defined in \eqref{ques1}}
In this subsection, we derive the explicit expression of $\overline{\by}^{k}\in\cs^{n-1}$ defined in \eqref{ques1}, which aims to solve the following minimization problem: For any given $\bfd\in\cs^{n-1}$ and two constants  $\alpha,\la>0$, compute
\BE\label{ques}
\begin{array}{lc}
\min\limits_{\by\in\Rn}  &  \displaystyle \big\langle\by-\bfd , \nabla f(\bfd) \big\rangle +\frac{1}{2\alpha}\|\by-\bfd\|^2+{\lambda}\|\by\|_1\\[2mm]
\mbox{s.t.} &\by\in\cs^{n-1}.
 \end{array}
\EE

To find a global solution to problem \eqref{ques}, we first derive the following necessary condition.
\begin{lemma} \label{lem:xkadd}
If $\overline{\by}\in\cs^{n-1}$ solves problem \eqref{ques}, then we have
\[
 \overline{\by} \odot \bz \ge {\bf 0},
\]
where $\bz:=\bfd-{\alpha} \nabla f(\bfd)$.
\end{lemma}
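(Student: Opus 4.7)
The plan is to exploit a sign-flipping symmetry that the objective of \eqref{ques} enjoys on the sphere. First I would complete the square in the quadratic part: writing
\[
\langle \by-\bfd,\nabla f(\bfd)\rangle+\frac{1}{2\alpha}\|\by-\bfd\|^2
=\frac{1}{2\alpha}\|\by-\bz\|^2+\mbox{const},
\]
with $\bz=\bfd-\alpha\nabla f(\bfd)$, so that problem \eqref{ques} is equivalent to
\[
\min_{\by\in\cs^{n-1}}\ \frac{1}{2\alpha}\|\by-\bz\|^2+\lambda\|\by\|_1.
\]
This rewriting is the main reason the sign-flipping trick becomes transparent, since $\bz$ is now the only piece of the objective sensitive to coordinate-wise signs of $\by$.

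Next I would argue by contradiction. Suppose $\overline{\by}\in\cs^{n-1}$ is a global minimizer of \eqref{ques} but there exists an index $i$ with $\overline{y}_i z_i<0$. Define the perturbed vector $\widetilde{\by}$ by $\widetilde{y}_j=\overline{y}_j$ for $j\neq i$ and $\widetilde{y}_i=-\overline{y}_i$. Then $\widetilde{\by}$ is feasible for \eqref{ques}, because $\|\widetilde{\by}\|=\|\overline{\by}\|=1$, and $\|\widetilde{\by}\|_1=\|\overline{\by}\|_1$ since the two vectors differ only by a sign on one coordinate. Hence the only contribution to the change in objective value comes from the quadratic term $\tfrac{1}{2\alpha}\|\by-\bz\|^2$.

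A direct computation gives
\[
\|\widetilde{\by}-\bz\|^2-\|\overline{\by}-\bz\|^2
=(-\overline{y}_i-z_i)^2-(\overline{y}_i-z_i)^2
=4\,\overline{y}_i z_i,
\]
so the change in objective from $\overline{\by}$ to $\widetilde{\by}$ equals $\tfrac{2\overline{y}_iz_i}{\alpha}$. Under the assumption $\overline{y}_i z_i<0$, this quantity is strictly negative, which contradicts the global optimality of $\overline{\by}$. Therefore $\overline{y}_i z_i\ge 0$ for every $i$, i.e., $\overline{\by}\odot\bz\ge\mathbf{0}$.

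There is not really a hard step here; the argument is essentially a one-line perturbation after the square-completion. The only subtlety worth being careful about is verifying that the sign flip preserves both the sphere constraint and the $\ell_1$-norm simultaneously, which is exactly why the argument works on $\cs^{n-1}$ (and not, say, on a generic constraint set where flipping a coordinate might leave the feasible region).
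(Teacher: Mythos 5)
Your proof is correct and follows essentially the same route as the paper: complete the square to reduce \eqref{ques} to minimizing $\frac{1}{2\alpha}\|\by-\bz\|^2+\lambda\|\by\|_1$ over $\cs^{n-1}$, then flip the sign of the offending coordinate and observe that the objective strictly decreases by $\frac{2}{\alpha}|\overline{y}_iz_i|$, contradicting optimality. The sign-flip perturbation, the preservation of both the sphere constraint and the $\ell_1$-norm, and the resulting computation all match the paper's argument.
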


\begin{proof}
Let $\overline{\by}:=(\overline{y}_1,\ldots,\overline{y}_n)^T$ and $\bz:=(z_1,\ldots,z_n)^T$.
For the sake of contradiction, suppose there exists an index $1\le t\le n$ such that $\overline{y}_tz_t<0$. It follows that $\overline{\by}\in\cs^{n-1}$ is a solution to problem \eqref{ques} if and only if
\BE \label{def:xk1}
\overline{\by}=\argmin_{\by\in\cs^{n-1}} \Big\{\frac{1}{2\alpha}\| \by-\bz\|^2+\lambda\|\by\|_1\Big\}.
\EE
Let $\widetilde{\by}=(\overline{y}_1, \ldots, \overline{y}_{t-1}, -\overline{y}_t, \overline{y}_{t+1}, \ldots,\overline{y}_n)^T$. Then it is easy to verify that $\widetilde{\by}\in \cs^{n-1}$ and
\[
\Big(\frac{1}{2\alpha}\|\overline{\by}-\bz\|^2+\lambda\|\overline{\by}\|_1\Big)
- \Big(\frac{1}{2\alpha}\|\widetilde{\by}-\bz\|^2+\lambda\|\widetilde{\by}\|_1\Big)=-\frac{2}{\alpha} \overline{y}_tz_t>0.
\]
This contradicts the assumption that $\overline{\by}$ solves \eqref{def:xk1}. Therefore, we have $\overline{\by}\odot \bz \ge {\bf 0}$.
\end{proof}

On the explicit expression of a global solution to problem \eqref{ques}, we have the following theorem.
\begin{theorem} \label{thm:xksol}
Let $\bz=\bfd-\alpha \nabla f(\bfd)\equiv (z_1,\ldots, z_n)^T$. Define $\bv=(v_1,\ldots, v_n)^T\in\Rn$ and $\bw=(w_1,\ldots, w_n)^T\in\Rn$ by
\BE\label{def:zwk}
v_j=\left\{
\begin{array}{cl}
1, &\mbox{if $z_j\ge 0$},\\[2mm]
-1, & \mbox{otherwise}
\end{array}
\right.\quad\mbox{and}\quad
w_j=\lambda-\frac{1}{\alpha}|z_j|,
\EE
for $j=1,\ldots,n$. Let $t=\argmin_{j\in[n]}w_j$ and $\bw_{-}=\min(0,\bw)$.
Then a global solution  to  \eqref{ques} is given by
\[
\overline{\by}=\left\{
\begin{array}{ll}
(\underbrace{0, \ldots, 0}_{t-1}, v_t, \underbrace{0, \ldots, 0}_{n-t})^T, &\mbox{if $\bw\ge {\bf 0}$},\\[2mm]
-\frac{\bw_{-}}{\|\bw_{-}\|}\odot \bv, & \mbox{otherwise}.
\end{array}
\right.
\]
\end{theorem}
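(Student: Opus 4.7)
The plan is to first apply completion of squares in \eqref{ques}: modulo an additive constant (using $\|\by\|^2 = 1$), the objective equals $\tfrac{1}{2\alpha}\|\by-\bz\|^2 + \lambda\|\by\|_1$ where $\bz = \bfd - \alpha\nabla f(\bfd)$; this reduction was already exploited in \eqref{def:xk1}. The sign-agreement condition $\overline{\by}\odot\bz \ge {\bf 0}$ from Lemma \ref{lem:xkadd} then forces each optimal component $\overline{y}_j$ to share a sign with $z_j$ (or be zero). With the convention of $\bv$ in \eqref{def:zwk} one checks $\bv\odot\bz = |\bz|$, so writing $\overline{\by} = \bv\odot\bu$ with $\bu := |\overline{\by}| \ge {\bf 0}$ loses no generality---at coordinates where $z_j = 0$ the objective is invariant under a sign flip of $\overline{y}_j$, so one may freely choose $\overline{y}_j \ge 0$ there.

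Since $v_j^2 = 1$, the key identities $\|\bv\odot\bu - \bz\|^2 = \|\bu - |\bz|\|^2$ and $\|\bv\odot\bu\|_1 = {\bf 1}^T\bu$ (using $\bu \ge {\bf 0}$), together with $\|\bu\|^2 = 1$, reduce the subproblem, after dropping constants, to the linear program
\[
\min_{\bu\in\Rn}\ \bu^T\bw \quad\mbox{s.t.}\quad \bu\ge{\bf 0},\ \|\bu\|=1,
\]
with $\bw$ as in \eqref{def:zwk}. From here the remaining task is a case split on the sign pattern of $\bw$.

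If $\bw \ge {\bf 0}$, the elementary inequality $\sum_j u_j \ge \sqrt{\sum_j u_j^2} = 1$ (valid because $u_j \ge 0$) combined with $w_j \ge w_t \ge 0$ gives $\bu^T\bw \ge w_t\sum_j u_j \ge w_t$, with equality at $\bu = \bfe_t$, yielding $\overline{\by} = v_t\bfe_t$. Otherwise, $(\bw_-)_j \le w_j$ together with $u_j \ge 0$ gives $\bu^T\bw \ge \bu^T\bw_-$, and Cauchy--Schwarz with $\|\bu\|=1$ yields $\bu^T\bw_- \ge -\|\bw_-\|$; equality is attained at $\bu = -\bw_-/\|\bw_-\|$, which is feasible (since $\bw_- \le {\bf 0}$) and well-defined (since $\bw_- \ne {\bf 0}$), giving $\overline{\by} = -(\bw_-/\|\bw_-\|)\odot\bv$. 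The main obstacle I expect is the bookkeeping around the $\bu$-reduction---in particular justifying that no solutions are lost at coordinates where $z_j = 0$ or where several $w_j$ tie for the minimum; once the reduction is in place, both branches of the case split are essentially one-line inequalities with transparent equality conditions.
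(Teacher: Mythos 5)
Your proposal is correct and follows essentially the same route as the paper: both invoke Lemma \ref{lem:xkadd} to reduce the objective to $\sum_j w_j |y_j|$ over the sign-constrained sphere, then split on the sign of $\bw$, using $\sum_j|y_j|\ge\sum_j y_j^2=1$ in the nonnegative case and Cauchy--Schwarz against $\bw_-$ otherwise. Your substitution $\bu=|\overline{\by}|$ and the explicit remark about coordinates with $z_j=0$ are only cosmetic refinements of the paper's argument (though note the reduced problem is not a linear program, since $\|\bu\|=1$ is not a linear constraint).
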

\begin{proof}
Using Lemma \ref{lem:xkadd} and \eqref{def:xk1} we have
\BE\label{xk:exp1}
\overline{\by}=\argmin_{\by\in\cs^{n-1}_*} \Big\{-\frac{1}{\alpha}\by^T\bz+\lambda\|\by\|_1\Big\},
\EE
where $\cs^{n-1}_*:=\{\by\in\cs^{n-1}\; | \; \by \odot \bz \ge {\bf 0}\}$. For any $\by\in\cs^{n-1}_*$, we have
\begin{eqnarray} \label{xk:exp2}
-\frac{1}{\alpha}\by^T\bz+\lambda\|\by\|_1  &=& \sum\limits_{j=1}^n\big(-\frac{1}{\alpha}y_jz_j+\lambda|y_j|\big)\nonumber \\
&=&\sum\limits_{j=1}^n\big(-\frac{1}{\alpha}|y_j| |z_j|+\lambda|y_j|\big) \nonumber \\
&=&\sum\limits_{j=1}^n\big(\la-\frac{1}{\alpha}| z_j|\big) |y_j| \equiv\sum\limits_{j=1}^nw_j |y_j|,
\end{eqnarray}
where $w_j$'s are defined by \eqref{def:zwk}.

We now determine a global solution $\overline{\by}\in\cs^{n-1}$ to problem  \eqref{ques} as follows: We first assume that $\bw\geq 0$. Let
\BE\label{zstar:1}
\overline{\by}=(\underbrace{0, \ldots, 0}_{t-1}, v_{t}, \underbrace{0, \ldots, 0}_{n-t})^T \equiv(\overline{y}_1,\ldots,\overline{y}_n)^T\in\cs^{n-1}_*,
\EE
where $\bv$ is  defined by \eqref{def:zwk} and $t=\argmin_{j\in[n]}w_j$. Then, for any $\by\in\cs^{n-1}_*$,  we have
\begin{eqnarray*}
\sum\limits_{j=1}^nw_j |y_j| \ge w_t\sum\limits_{j=1}^n|y_j|\geq w_t\sum\limits_{j=1}^ny_j^2=w_t
=\sum\limits_{j=1}^n w_j|\overline{y}_j|.
\end{eqnarray*}
Hence, it follows from \eqref{xk:exp1} and \eqref{xk:exp2} that $\overline{\by}$ defined in \eqref{zstar:1} solves problem \eqref{ques}.

On the other hand, suppose there exists at least one index $ l\in [n]$ such that $w_l<0$. Let $\bw_{-}=\min(0,\bw)\neq {\bf 0}$ and
\BE\label{zstar:2}
\overline{\by}=-\frac{\bw_{-}}{\|\bw_{-}\|}\odot \bv\equiv(\overline{y}_1,\ldots,\overline{y}_n)^T\in\cs^{n-1}_*,
\EE
where $\bv$ is  defined by \eqref{def:zwk}.
Then, for any $\by\in\cs^{n-1}_*$,  we have
\begin{eqnarray*}
\sum\limits_{j=1}^nw_j|y_j|\geq\sum\limits_{w_j<0}w_j|y_j|=\langle \bw_{-}, |\by|\rangle \geq-\|\bw_{-}\|\|\by\|=-\|\bw_{-}\|=\sum\limits_{j=1}^nw_j|\overline{y}_j|.
\end{eqnarray*}
This, together with \eqref{xk:exp1} and \eqref{xk:exp2}, implies  that  $\overline{\by}$ defined by \eqref{zstar:2} solves problem \eqref{ques}.
\end{proof}

Based on Theorem \ref{thm:xksol}, we can easily find the explicit expression of $\overline{\by}^{k}\in\cs^{n-1}$ defined in \eqref{ques1}, which is stated as Algorithm \ref{dm}.
\begin{algorithm}[!h]
\caption{Computing a global solution to  (\ref{ques1})} \label{dm}
\begin{description}
\item [{\rm Step 0.}] Compute $\bz^k=\by^k-{\alpha} \nabla f(\by^k)\equiv (z_1^k,\ldots, z_n^k)^T$. Set $\bv^k=(v_1^k,\ldots, v_n^k)^T\in\Rn$ and $\bw^k=(w_1^k,\ldots, w_n^k)^T\in\Rn$ with
\[
v_j^k=\left\{
\begin{array}{rl}
1, &\mbox{if $z_j^k\ge {\bf 0}$},\\[2mm]
-1, & \mbox{otherwise}
\end{array}
\right.\quad\mbox{and}\quad
w_j^k=\lambda^k-\frac{1}{\alpha}|z_j^k|, \quad j=1,\ldots,n.
\]

\item [{\rm Step 1.}] find $j_k=\argmin_{j\in[n]} w^k_j$ and set $\bw^k_{-}:=\min(0,\bw^k)$.

\item [{\rm Step 2.}] If $w^k_{j_k}\ge {\bf 0}$, then set $\overline{\by}^{k}=(\underbrace{0, \ldots, 0}_{j_k-1}, v_{j_k}^k, \underbrace{0, \ldots, 0}_{n-j_k})^T$; otherwise, set $\overline{\by}^{k}=-\frac{\bw^k_{-}}{\|\bw^k_{-}\|}\odot \bv^k$.
\end{description}
\end{algorithm}
\subsection{Lipschitz continuity of the gradient mapping $\nabla f$}\label{sec32}

It is easy to verify that the gradient of $f$ defined in \eqref{def:fg} at $\by\in\cs^{n-1}$ is given by
\BE\label{f:grad}
\nabla f(\by)=2\big(A^TA(\by\odot \by)-A^T\bb\big)\odot\by.
\EE

On the global Lipschitz continuity of $\nabla f$ on the closed unit ball in $\Rn$, we have the following result.
\begin{lemma}\label{f:lc}
Let $\cb_1({\bf 0}):=\{\by\in\Rn\; |\; \|\by\|\le 1\}$ be the closed unit ball in $\Rn$. Then there exists a constant $L_f>0$ such that
\[
\|\nabla f(\by_1)-\nabla f(\by_2)\|\le L_f\|\by_1-\by_2\|,\quad  \forall \by_1,\by_2\in\cb_1({\bf 0}).
\]
\end{lemma}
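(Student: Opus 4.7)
The plan is to exploit two facts: $\nabla f$ is a polynomial (hence smooth) map, and the set $\cb_1(\mathbf{0})$ is compact. Thus any direct bound on the relevant algebraic pieces will suffice to produce a uniform Lipschitz constant. I would not invoke a general compactness-plus-$C^1$ argument but rather carry out the estimate explicitly, since the bookkeeping also gives a usable value of $L_f$ in terms of $\|A\|_2$ and $\|\bb\|$.

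Starting from the formula \eqref{f:grad}, I would decompose
\[
\nabla f(\by_1)-\nabla f(\by_2) = 2\bigl[A^TA(\by_1\odot\by_1)\bigr]\odot\by_1 - 2\bigl[A^TA(\by_2\odot\by_2)\bigr]\odot\by_2 - 2(A^T\bb)\odot(\by_1-\by_2).
\]
The last term is immediately bounded by $2\|A^T\bb\|\,\|\by_1-\by_2\|$ using $\|\ba\odot\bb\|\le\|\ba\|_\infty\|\bb\|\le\|\ba\|\|\bb\|$. For the cubic piece, I would add and subtract $[A^TA(\by_1\odot\by_1)]\odot\by_2$ and then use the identity
\[
\by_1\odot\by_1-\by_2\odot\by_2 = (\by_1+\by_2)\odot(\by_1-\by_2)
\]
to write
\[
\bigl[A^TA(\by_1\odot\by_1)\bigr]\odot\by_1 - \bigl[A^TA(\by_2\odot\by_2)\bigr]\odot\by_2
= \bigl[A^TA(\by_1\odot\by_1)\bigr]\odot(\by_1-\by_2) + \bigl[A^TA\bigl((\by_1+\by_2)\odot(\by_1-\by_2)\bigr)\bigr]\odot\by_2.
\]

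For $\by_1,\by_2\in\cb_1(\mathbf{0})$ I would repeatedly apply $\|\ba\odot\bb\|\le\|\ba\|\|\bb\|$ together with $\|A^TA\bu\|\le\|A\|_2^2\|\bu\|$, using $\|\by_i\|\le 1$, $\|\by_i\odot\by_i\|\le \|\by_i\|^2\le 1$, and $\|\by_1+\by_2\|\le 2$. This yields
\[
\bigl\|\bigl[A^TA(\by_1\odot\by_1)\bigr]\odot(\by_1-\by_2)\bigr\| \le \|A\|_2^2\,\|\by_1-\by_2\|,
\]
\[
\bigl\|\bigl[A^TA\bigl((\by_1+\by_2)\odot(\by_1-\by_2)\bigr)\bigr]\odot\by_2\bigr\| \le 2\|A\|_2^2\,\|\by_1-\by_2\|.
\]
Summing and combining with the linear term produces
\[
\|\nabla f(\by_1)-\nabla f(\by_2)\| \le \bigl(6\|A\|_2^2 + 2\|A^T\bb\|\bigr)\|\by_1-\by_2\|,
\]
so one may take $L_f := 6\|A\|_2^2 + 2\|A^T\bb\|$.

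I do not anticipate a genuine obstacle: the only care needed is in the Hadamard–product inequalities (making sure the norms are placed correctly, e.g.\ on the factor being controlled pointwise), and in the algebraic rearrangement of the cubic difference so that each resulting summand exposes a single factor of $\by_1-\by_2$. Everything else is routine submultiplicativity on the compact set $\cb_1(\mathbf{0})$.
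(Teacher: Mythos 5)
Your proof is correct and takes essentially the same route as the paper: the same splitting of the cubic term by adding and subtracting $[A^TA(\by_1\odot\by_1)]\odot\by_2$, the same Hadamard-product and submultiplicativity bounds on the unit ball, and the same constant $L_f=6\|A^TA\|_2+2\|A^T\bb\|$ (your factorization $\by_1\odot\by_1-\by_2\odot\by_2=(\by_1+\by_2)\odot(\by_1-\by_2)$ is just the paper's two-term split written in closed form).
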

\begin{proof}
It follows from \eqref{f:grad} that, for any $\by_1,\by_2\in\cb_1({\bf 0})$,
\begin{eqnarray*}
&&\left\|\nabla f(\by_1)-\nabla f(\by_2)\right\|\nonumber \\
&=& 2\left\|\big(A^TA(\by_1\odot \by_1)-A^T\bb\big)\odot\by_1 - \big(A^TA(\by_2\odot \by_2)-A^T\bb\big)\odot\by_2\right\| \nonumber \\
&\le& 2\left\|\big(A^TA(\by_1\odot \by_1)\big)\odot\by_1-\big(A^TA(\by_2\odot \by_2)\big)\odot\by_2\right\|
+ 2\left\|(A^T\bb)\odot(\by_1 - \by_2)\right\| \nonumber \\
&\le& 2\left\|\big(A^TA(\by_1\odot \by_1)\big)\odot(\by_1-\by_2)\right\| +
2\left\|\big(A^TA(\by_1\odot \by_1 - \by_2\odot \by_2)\big)\odot\by_2\right\| \nonumber \\
&& + 2\left\|(A^T\bb)\odot(\by_1 - \by_2)\right\|\nonumber\\
&\le& 2\left\|\big(A^TA(\by_1\odot \by_1)\big)\odot(\by_1-\by_2)\right\| +
2\left\|\big(A^TA(\by_1\odot (\by_1 -\by_2))\big)\odot\by_2\right\| \nonumber
\end{eqnarray*}
\begin{eqnarray*}
&& + 2\left\|\big(A^TA((\by_1 -\by_2)\odot\by_2)\big)\odot\by_2 \right\| + 2\left\|(A^T\bb)\odot(\by_1 - \by_2)\right\|\nonumber\\
&\le& 2\left\|A^TA(\by_1\odot \by_1)\right\|\left\|\by_1-\by_2\right\| +
2\left\|A^TA(\by_1\odot (\by_1 -\by_2))\right\|\left\|\by_2\right\| \nonumber \\
&& + 2\left\|A^TA((\by_1 -\by_2)\odot\by_2)\right\|\left\|\by_2 \right\| + 2\left\|A^T\bb\right\| \left\|\by_1 - \by_2\right\|\nonumber\\
&\le& 2\left\|A^TA\right\|_2 \left\|\by_1\odot \by_1\right\|\left\|\by_1-\by_2\right\| +
2 \left\|A^TA\right\|_2\|\by_1\|\left\|\by_1 -\by_2\right\| \|\by_2\|\nonumber \\
&& + 2\left\|A^TA\right\|_2\left\|\by_1 -\by_2\right\| \|\by_2\|^2 + 2\left\|A^T\bb\right\| \left\|\by_1 - \by_2\right\|\nonumber \\
&\le& L_f\left\|\by_1 - \by_2\right\|,
\end{eqnarray*}
where the fourth inequality follows from the fact that $\|\bz_1\odot \bz_2\|\le \|\bz_1\|\|\bz_2\|$ for all $\bz_1,\bz_2\in\Rn$, the fifth and sixth inequalities use $\|\by_1\odot \by_1\|\le \|\by_1\|\le 1$ and $\|\by_2\|\le 1$ for all $\by_1,\by_2\in\cb_1({\bf 0})$, and $L_f=6\|A^TA\|_2 + 2\|A^T\bb\|$.
\end{proof}

We also recall the descent lemma for the continuously differentiable function $f$  defined in \eqref{def:fg} (see for instance \cite{B99,OR70}).
\begin{lemma}\label{lem:dl}
Under the same assumptions as in Lemma {\rm\ref{f:lc}}, we have
\[
f(\by_2)\le f(\by_1)+\langle\by_2-\by_1,\nabla f(\by_1)\rangle + \frac{L_f}{2}\|\by_2-\by_1\|^2,\quad\forall \by_1,\by_2\in\cb_1({\bf 0}).
\]
\end{lemma}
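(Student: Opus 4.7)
The plan is to derive this descent inequality as a direct consequence of the Lipschitz gradient bound just established in Lemma \ref{f:lc}. The argument is the classical one (often attributed to standard references like \cite{B99,OR70}), adapted to the closed unit ball.

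First I would fix $\by_1,\by_2\in\cb_1({\bf 0})$ and parametrize the line segment joining them as $\by(t):=\by_1+t(\by_2-\by_1)$ for $t\in[0,1]$. Since $\cb_1({\bf 0})$ is convex, $\by(t)\in\cb_1({\bf 0})$ for all $t\in[0,1]$, which will be essential because Lemma \ref{f:lc} only guarantees the Lipschitz estimate on the ball. Because $f$ is continuously differentiable on $\Rn$ (it is a polynomial in the entries of $\by$), the fundamental theorem of calculus yields
\[
f(\by_2)-f(\by_1)=\int_0^1 \langle \nabla f(\by(t)),\by_2-\by_1\rangle\,dt.
\]

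Next I would split the integrand by adding and subtracting $\nabla f(\by_1)$, getting
\[
f(\by_2)-f(\by_1)=\langle \nabla f(\by_1),\by_2-\by_1\rangle+\int_0^1 \langle \nabla f(\by(t))-\nabla f(\by_1),\by_2-\by_1\rangle\,dt.
\]
For the remainder term I would invoke Cauchy-Schwarz and then Lemma \ref{f:lc} applied to the pair $\by(t),\by_1\in\cb_1({\bf 0})$, yielding the pointwise bound
\[
|\langle \nabla f(\by(t))-\nabla f(\by_1),\by_2-\by_1\rangle|\le L_f\|\by(t)-\by_1\|\,\|\by_2-\by_1\|=L_f\,t\,\|\by_2-\by_1\|^2.
\]
Integrating over $t\in[0,1]$ gives the bound $\tfrac{L_f}{2}\|\by_2-\by_1\|^2$ on the remainder, and combining yields the desired inequality.

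There is essentially no obstacle here: the only point requiring a moment of care is checking that the Lipschitz estimate of Lemma \ref{f:lc} is applicable along the whole segment, which follows from convexity of $\cb_1({\bf 0})$. Everything else is routine calculus, so I would keep the proof short and refer the reader to \cite{B99,OR70} for the general version.
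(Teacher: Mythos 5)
Your proof is correct and is exactly the standard argument the paper implicitly relies on: the paper offers no proof of this lemma, merely citing \cite{B99,OR70}, and your integral-remainder derivation (with the correct observation that convexity of $\cb_1({\bf 0})$ keeps the segment inside the region where Lemma \ref{f:lc} applies) is the classical proof found in those references. Nothing is missing.
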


\subsection{Global convergence of Algorithm \ref{pgm}} \label{sec33}
In this subsection, we establish the  global convergence of Algorithm \ref{pgm}. We first derive the monotonicity of the sequences  $\{\la_k\}$ and $\{F(\la_k,\by^k)\}$ generated by Algorithm \ref{pgm} in a similar way as \cite[Lemma 3]{BS14}.

\begin{lemma}\label{lem:mon}
Let $\{(\by^k,\alpha_k,\la_k)\}$  be the sequence generated by Algorithm \ref{pgm}. Then the following conclusions hold true.
\begin{itemize}
  \item[{\rm i)}] The sequence  $\{\la_k\}$ is monotonically decreasing, which converges to a a limit $\la_*$.
  \item [{\rm ii)}] We can find $\alpha_k>0$  such that the sequence $\{F(\la_k,\by^k)\}$ is monotonically decreasing and for all $k\ge {\bf 0}$,
\[
F(\la_{k},\by^{k}) - F(\la_{k+1},\by^{k+1})\geq\frac{\gamma_2}{2}\|\by^{k+1}-\by^k\|^2.
\]
  \item [{\rm iii)}]
  \[
\sum_{k=0}^\infty\|\by^{k+1}-\by^k\|^2<\infty.
\]
\end{itemize}

\end{lemma}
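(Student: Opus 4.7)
The plan is to handle the three claims in order. Part (i) is essentially immediate: inspecting Algorithm \ref{pgm}, the only mechanism that ever modifies $\lambda_k$ is the assignment $\lambda_k \mapsto \rho_3\lambda_k$ with $\rho_3 \in (0,1)$. Hence $\{\lambda_k\}$ is positive and non-increasing, so by the monotone convergence theorem it has a finite limit $\lambda_* \geq 0$.

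The core of the argument is part (ii), which requires showing two things: that the backtracking loop actually terminates with some $\alpha_{k+1} > 0$, and that the resulting descent inequality holds. The key step I would carry out is a prox-descent bound of the form
$$F(\lambda, \overline{\by}^k) \leq F(\lambda, \by^k) + \Big(\frac{L_f}{2} - \frac{1}{2\alpha}\Big)\|\overline{\by}^k - \by^k\|^2,$$
valid for every $\alpha, \lambda > 0$ whenever $\overline{\by}^k \in \cs^{n-1}$ minimizes the subproblem \eqref{ques1} from $\by^k \in \cs^{n-1}$. This bound follows by combining (a) the optimality of $\overline{\by}^k$ tested against the feasible point $\by^k$, which after cancelling $f(\by^k)$ gives
$$\langle \overline{\by}^k-\by^k, \nabla f(\by^k)\rangle + \frac{1}{2\alpha}\|\overline{\by}^k-\by^k\|^2 + \lambda\|\overline{\by}^k\|_1 \leq \lambda\|\by^k\|_1,$$
with (b) the descent lemma (Lemma \ref{lem:dl}) applied to $\by^k, \overline{\by}^k \in \cb_1({\bf 0})$, together with the observation that $\chi_{\cs^{n-1}}$ vanishes at both points. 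Choosing any $\alpha \leq 1/(L_f + \gamma_2)$ makes the parenthetical factor at most $-\gamma_2/2$, so the descent test in Step 2 is satisfied. Because the line search shrinks $\alpha$ geometrically while keeping it above the floor $\gamma_1$, the repeat loop terminates after finitely many steps provided $\gamma_1 \leq 1/(L_f + \gamma_2)$, the tacit assumption flagged in the remark after the algorithm. At termination the loop delivers $\by^{k+1} = \overline{\by}^k$ and $\lambda_{k+1}$ equal to the current $\lambda$, together with
$$F(\lambda_{k+1}, \by^{k+1}) \leq F(\lambda_{k+1}, \by^k) - \frac{\gamma_2}{2}\|\by^{k+1}-\by^k\|^2.$$
A one-line step then uses $\lambda_{k+1} \leq \lambda_k$ from (i) and $\|\by^k\|_1 \geq 0$ to replace $\lambda_{k+1}$ by $\lambda_k$ on the right, yielding the stated inequality.

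Part (iii) is then a telescoping argument: since $F \geq 0$ (all three summands in its definition are non-negative), summing the descent inequality from (ii) over $k = 0, 1, \ldots, N-1$ gives
$$\frac{\gamma_2}{2}\sum_{k=0}^{N-1}\|\by^{k+1}-\by^k\|^2 \leq F(\lambda_0,\by^0) - F(\lambda_N,\by^N) \leq F(\lambda_0,\by^0),$$
and letting $N \to \infty$ yields the claimed summability. The main obstacle I anticipate is the bookkeeping in part (ii) needed to distinguish the value of $\lambda$ at the start of iteration $k$ from its possibly smaller value at the end of the repeat loop (since $\lambda$ can be decreased inside the loop by the factor $\rho_3$); keeping this distinction straight is precisely what turns the loop's local descent test into the telescoping-friendly inequality the lemma states.
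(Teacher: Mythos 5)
Your proposal is correct and follows essentially the same route as the paper's proof: optimality of $\overline{\by}^k$ tested against the feasible point $\by^k$, combined with the descent lemma (Lemma \ref{lem:dl}) and the monotonicity $\la_{k+1}\le\la_k$, yields the descent inequality for $\alpha\le 1/(L_f+\gamma_2)$, and part (iii) follows by telescoping. Your additional remarks on termination of the backtracking loop and the tacit requirement $\gamma_1\le 1/(L_f+\gamma_2)$ are consistent with the paper's Remark 2.1 rather than a departure from its argument.
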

\begin{proof}
i) From  Algorithm \ref{pgm}, it is easy to see that the sequence $\{\la_k\}$ is monotonically decreasing and bounded below. Therefore, it converges to a limit $\la_*$.

ii)
Let $k\ge 0$ be fixed. It follows from Step 1 and Step 2 Algorithm \ref{pgm} that
\[
\by^{k+1}=\argmin_{\by\in\cs^{n-1}} \big\{f(\by^k)+\langle\by-\by^k , \nabla f(\by^k) \rangle +\frac{1}{2\alpha_{k+1}}\|\by-\by^k\|^2+\la_{k+1}\|\by\|_1\big\},
\]
which, together with the fact that $\by^{k},\by^{k+1}\in \cs^{n-1}$, and $\la_{k+1}\le \la_{k}$, yields
\BE\label{eq:s12}
\langle\by^{k+1}-\by^k , \nabla f(\by^k) \rangle +\frac{1}{2\alpha_{k+1}}\|\by^{k+1}-\by^k\|^2+\la_{k+1}\|\by^{k+1}\|_1\leq \la_{k+1}\|\by^k\|_1\leq \la_{k}\|\by^k\|_1.
\EE
Using Lemma \ref{lem:dl} and \eqref{eq:s12} we have
\begin{eqnarray*}
&& f(\by^{k+1})+\la_{k+1}\|\by^{k+1}\|_1 \\
&\le& f(\by^{k+1}) -\langle\by^{k+1}-\by^k , \nabla f(\by^k) \rangle - \frac{1}{2\alpha_{k+1}}\|\by^{k+1}-\by^k\|^2 + \la_{k}\|\by^{k}\|_1\\
&=& \left(f(\by^k)+\la_{k}\|\by^k\|_1\right) - \frac{1}{2\alpha_{k+1}}\|\by^{k+1}-\by^k\|^2 \\
&& + \left(f(\by^{k+1}) -  f(\by^k) -\langle\by^{k+1}-\by^k , \nabla f(\by^k) \rangle \right)\\
&\le& \left(f(\by^k)+\la_{k}\|\by^k\|_1\right) -\frac{1}{2}\left(\frac{1}{\alpha_{k+1}}- L_f\right)\|\by^{k+1}-\by^k\|^2.
\end{eqnarray*}
Taking $\alpha_{k+1} \le 1/(L_f+\gamma_2)$ we have
\[
\left(f(\by^{k})+\la_{k+1}\|\by^{k}\|_1\right)-\left(f(\by^{k+1})+\la_{k+1}\|\by^{k+1}\|_1\right)\geq\frac{\gamma_2}{2}\|\by^{k+1}-\by^k\|^2.
\]
This implies that
\BE\label{eq:fk1}
F(\la_{k},\by^{k}) - F(\la_{k+1},\by^{k+1})\geq\frac{\gamma_2}{2}\|\by^{k+1}-\by^k\|^2,\quad \forall k\ge 0.
\EE
This shows that the sequence $\{F_k(\by^k)\}$  is  monotonically decreasing.

iii) It follows from  \eqref{eq:fk1} that, for any integer $l>0$,
\[
\sum_{k=0}^{l}\|\by^{k+1}-\by^k\|^2 \le \frac{2}{\gamma_2} \left( F(\la_{0},\by^{0}) - F(\la_{l+1},\by^{l+1})\right) \le\frac{2}{\gamma_2} F(\la_{0},\by^{0})
\]
and thus
\[
\sum_{k=0}^{\infty}\|\by^{k+1}-\by^k\|^2 <\infty.
\]
\end{proof}

\begin{lemma}\label{lem:grad}
Let $\{(\by^k,\alpha_k,\la_k)\}$  be the sequence generated by Algorithm \ref{pgm}. For any $k\ge 0$, let
\[
\bq^k:= \nabla f(\by^{k+1}) - \nabla f(\by^k) -\frac{1}{\alpha_{k+1}}(\by^{k+1}-\by^{k}).
\]
Then there exist $\eta^{k+1}\in \partial_\by (\|\by^{k+1}\|_1)$ and $\zeta^{k+1}\in\hat{\partial}\chi_{\cs^{n-1}}(\by^{k+1})$ such that  $\bq^k=\nabla f(\by^{k+1})+\la_{k+1}\eta^{k+1}+\zeta^{k+1}\in \partial_\by  F(\la_{k+1},\by^{k+1})$ and we have for all $k\ge 0$,
\[
\|\bq^k\|\le \left(L_f+\frac{1}{\gamma_1}\right) \|\by^{k+1}-\by^{k}\|,
\]
where $L_f$ is a constant defined as in Lemma \ref{f:lc}.
\end{lemma}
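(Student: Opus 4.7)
The plan is to start from the first-order optimality condition of the subproblem that defines $\by^{k+1}$, then rearrange terms and apply the Lipschitz bound from Lemma \ref{f:lc} together with the lower bound $\alpha_{k+1}\ge \gamma_1$ produced by the stepsize update in Algorithm \ref{pgm}.

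\textbf{Step 1: Optimality condition for the subproblem.} By Step 1 and Step 2 of Algorithm \ref{pgm}, $\by^{k+1}$ is a global minimizer over $\cs^{n-1}$ of the function
\[
\by\mapsto f(\by^k)+\langle\by-\by^k,\nabla f(\by^k)\rangle+\frac{1}{2\alpha_{k+1}}\|\by-\by^k\|^2+\la_{k+1}\|\by\|_1.
\]
Equivalently, $\by^{k+1}$ minimizes on $\Rn$ the sum of the above function and $\chi_{\cs^{n-1}}$. Applying Fermat's rule and the generalized sum rule (the first three terms are smooth plus a proper convex function, so their subdifferential equals the classical gradient plus $\la_{k+1}\partial\|\cdot\|_1$, while adding the lsc term $\chi_{\cs^{n-1}}$ gives a subdifferential inclusion), there exist $\eta^{k+1}\in\partial(\|\by^{k+1}\|_1)$ and $\zeta^{k+1}\in\hat{\partial}\chi_{\cs^{n-1}}(\by^{k+1})$ such that
\[
0=\nabla f(\by^k)+\frac{1}{\alpha_{k+1}}(\by^{k+1}-\by^k)+\la_{k+1}\eta^{k+1}+\zeta^{k+1}.
\]

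\textbf{Step 2: Identify $\bq^k$ as a limiting subgradient of $F(\la_{k+1},\cdot)$.} Add $\nabla f(\by^{k+1})$ to both sides of the previous identity to obtain
\[
\nabla f(\by^{k+1})+\la_{k+1}\eta^{k+1}+\zeta^{k+1}=\nabla f(\by^{k+1})-\nabla f(\by^k)-\frac{1}{\alpha_{k+1}}(\by^{k+1}-\by^k)=\bq^k.
\]
Since $f$ is $C^1$ and $g(\la_{k+1},\cdot)=\la_{k+1}\|\cdot\|_1$ is proper, convex, and lsc, the sum rule for the (limiting) subdifferential gives
\[
\bq^k\in \nabla f(\by^{k+1})+\la_{k+1}\partial(\|\by^{k+1}\|_1)+\hat{\partial}\chi_{\cs^{n-1}}(\by^{k+1})\subseteq \partial_\by F(\la_{k+1},\by^{k+1}),
\]
which is exactly the asserted inclusion.

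\textbf{Step 3: Norm bound on $\bq^k$.} By the triangle inequality,
\[
\|\bq^k\|\le \|\nabla f(\by^{k+1})-\nabla f(\by^k)\|+\frac{1}{\alpha_{k+1}}\|\by^{k+1}-\by^k\|.
\]
Since $\by^k,\by^{k+1}\in\cs^{n-1}\subset\cb_1({\bf 0})$, Lemma \ref{f:lc} yields $\|\nabla f(\by^{k+1})-\nabla f(\by^k)\|\le L_f\|\by^{k+1}-\by^k\|$. Moreover, the backtracking rule in Step 2 of Algorithm \ref{pgm} only replaces $\alpha$ by $\max\{\gamma_1,\alpha\rho_i\}$, so $\alpha_{k+1}\ge \gamma_1$ and hence $1/\alpha_{k+1}\le 1/\gamma_1$. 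Combining these two estimates yields
\[
\|\bq^k\|\le \Big(L_f+\frac{1}{\gamma_1}\Big)\|\by^{k+1}-\by^k\|,
\]
completing the proof.

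The only potentially delicate point is the sum-rule step used to conclude $\bq^k\in\partial_\by F(\la_{k+1},\by^{k+1})$, since $\chi_{\cs^{n-1}}$ is nonconvex; however, because $f+g(\la_{k+1},\cdot)$ is the sum of a smooth function and a proper convex lsc function, the standard sum rule (\emph{e.g.} Proposition 10.5 in Rockafellar--Wets) applies and the inclusion follows. Once this is in hand, everything else is a direct consequence of the stepsize safeguard $\alpha_{k+1}\ge\gamma_1$ and the Lipschitz estimate from Lemma \ref{f:lc}.
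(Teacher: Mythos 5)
Your proposal is correct and follows essentially the same route as the paper: write the first-order optimality condition of the subproblem defining $\by^{k+1}$, rearrange to identify $\bq^k$ as an element of $\partial_\by F(\la_{k+1},\by^{k+1})$, and then bound $\|\bq^k\|$ via the triangle inequality, Lemma \ref{f:lc}, and the safeguard $\alpha_{k+1}\ge\gamma_1$. The only difference is that you spell out the subdifferential sum-rule justification that the paper passes over with ``it is easy to see,'' which is a reasonable bit of added care rather than a different argument.
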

\begin{proof}
Let $k\ge 0$ be fixed.
From Step 1 and Step 2 Algorithm \ref{pgm} we obtain $\alpha_{k+1}\ge \gamma_1>0$ and
\[
\by^{k+1}=\argmin_{\by\in\Rn} \big\{f(\by^k)+\langle\by-\by^k , \nabla f(\by^k) \rangle +\frac{1}{2\alpha_{k+1}}\|\by-\by^k\|^2+\la_{k+1}\|\by\|_1+\chi_{\cs^{n-1}}(\by)\big\},
\]
whose first-order optimality condition is given by
\[
\nabla f(\by^k)+\frac{1}{\alpha_{k+1}}(\by^{k+1}-\by^{k})+\la_{k+1}\eta^{k+1}+\zeta^{k+1}=0,
\]
where $\eta^{k+1}\in \partial_\by (\|\by^{k+1}\|_1)$ and $\zeta^{k+1}\in\hat{\partial}\chi_{\cs^{n-1}}(\by^{k+1})$.
Therefore,
\[
\nabla f(\by^k)+\la_{k+1}\eta^{k+1}+\zeta^{k+1}=-\frac{1}{\alpha_{k+1}}(\by^{k+1}-\by^{k})
\]
and thus
\[
\nabla f(\by^{k+1})+\la_{k+1}\eta^{k+1}+\zeta^{k+1}= \nabla f(\by^{k+1}) - \nabla f(\by^k) -\frac{1}{\alpha_{k+1}}(\by^{k+1}-\by^{k})\equiv \bq^k.
\]
It is easy to see that
\[
\nabla f(\by^{k+1})+\la_{k+1}\eta^{k+1}+\zeta^{k+1}\in \partial_\by  F(\la_{k+1},\by^{k+1}).
\]
Hence, $\bq^k\in \partial_\by  F(\la_{k+1},\by^{k+1})$.

On the other hand, we note that $\by^k,\by^{k+1}\in \cs^{n-1}$. Using the definition of $\bq^k$ and Lemma \ref{f:lc} we obtain, for all $k\ge 0$,
\begin{eqnarray*}
\|\bq^k\|&\leq&\|\nabla f(\by^{k+1})-\nabla f(\by^k)\|+ \frac{1}{\alpha_{k+1}}\|\by^{k+1}-\by^{k}\| \\
&\le& \left(L_f+\frac{1}{\alpha_{k+1}}\right) \|\by^{k+1}-\by^{k}\| \le \left(L_f+\frac{1}{\gamma_1}\right) \|\by^{k+1}-\by^{k}\|.
\end{eqnarray*}
\end{proof}

On the global convergence of Algorithm \ref{pgm}, we have the following result.
\begin{theorem}\label{thm:gc}
Let $\{(\by^k,\alpha_k,\la_k)\}$  be the sequence generated by Algorithm \ref{pgm} with $\lim_{k\to\infty}\lambda_k$ $=\lambda_*$. Then any accumulation point of  $\{\by^k\}$  is a critical point of $F(\la_*,\cdot)$.
\end{theorem}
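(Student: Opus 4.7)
The plan is to pass to the limit in the first-order optimality inclusion attached to each iterate by Lemma \ref{lem:grad}, using the closed-graph property of the limiting subdifferential together with the value-convergence $F(\la_{k_j+1},\by^{k_j+1})\to F(\la_*,\by^*)$ along the chosen subsequence.

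First, Lemma \ref{lem:mon} iii) gives $\|\by^{k+1}-\by^k\|\to 0$, so if $\by^{k_j}\to\by^*$ along a subsequence, then $\by^{k_j+1}\to\by^*$ as well. Because $\cs^{n-1}$ is closed and $\by^k\in\cs^{n-1}$ for all $k$, we get $\by^*\in\cs^{n-1}$, so $\chi_{\cs^{n-1}}(\by^{k_j+1})=\chi_{\cs^{n-1}}(\by^*)=0$. Continuity of $f$ and of $\|\cdot\|_1$, combined with $\la_{k_j+1}\to\la_*$ from Lemma \ref{lem:mon} i), then yields $F(\la_{k_j+1},\by^{k_j+1})\to F(\la_*,\by^*)$; this is the value-convergence needed before applying closed-graph results for limiting subdifferentials.

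Next I invoke Lemma \ref{lem:grad}. The bound $\|\bq^k\|\le (L_f+1/\gamma_1)\|\by^{k+1}-\by^k\|$ forces $\bq^{k_j}\to {\bf 0}$. Writing the optimality decomposition $\bq^k=\nabla f(\by^{k+1})+\la_{k+1}\eta^{k+1}+\zeta^{k+1}$ with $\eta^{k+1}\in\partial\|\by^{k+1}\|_1$ and $\zeta^{k+1}\in\hat{\partial}\chi_{\cs^{n-1}}(\by^{k+1})$, the sequence $\{\eta^{k+1}\}$ is bounded (its entries lie in $[-1,1]$), so a further subsequence converges to some $\eta^*$. The closed-graph property of $\partial\|\cdot\|_1$ together with $\by^{k_j+1}\to\by^*$ then yields $\eta^*\in\partial\|\by^*\|_1$.

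It remains to take limits in $\zeta^{k_j+1}=\bq^{k_j}-\nabla f(\by^{k_j+1})-\la_{k_j+1}\eta^{k_j+1}$. By continuity of $\nabla f$ and the convergences already established, $\zeta^{k_j+1}\to\zeta^*:=-\nabla f(\by^*)-\la_*\eta^*$. The key step is to show $\zeta^*$ belongs to the limiting normal cone of $\cs^{n-1}$ at $\by^*$; this uses the fact that $\cs^{n-1}$ is a smooth (hence regular) submanifold of $\Rn$ whose normal cone at $\by$ is $\{c\by:c\in\R\}$, and this graph is closed in $\by$. Applying the subdifferential sum rule (valid because $f$ is $C^1$) then gives $0=\nabla f(\by^*)+\la_*\eta^*+\zeta^*\in\partial_\by F(\la_*,\by^*)$, i.e., $\by^*$ is a critical point of $F(\la_*,\cdot)$. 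The main obstacle is this last step: carefully justifying the closed-graph property of $\hat{\partial}\chi_{\cs^{n-1}}$ at the accumulation point and the applicability of the limiting subdifferential sum rule to the nonconvex nonsmooth composite $f+\la_*\|\cdot\|_1+\chi_{\cs^{n-1}}$; everything else is essentially bookkeeping built on Lemmas \ref{lem:mon} and \ref{lem:grad}.
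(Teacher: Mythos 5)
Your proposal is correct and follows essentially the same route as the paper: vanishing successive differences (Lemma \ref{lem:mon} iii)) plus the bound of Lemma \ref{lem:grad} give subgradients $\bq^{k_j}\to{\bf 0}$ along the subsequence, and one passes to the limit via closedness of the subdifferential after securing value convergence. The only differences are in the bookkeeping: you get $F(\la_{k_j+1},\by^{k_j+1})\to F(\la_*,\by^*)$ directly from the closedness of $\cs^{n-1}$ (so $\chi_{\cs^{n-1}}$ vanishes along the sequence and at the limit), where the paper runs a $\liminf$/$\limsup$ argument, and you unpack the closedness of $\partial_\by F$ term by term (gradient, $\ell_1$ subdifferential, normal cone to the sphere), where the paper invokes it wholesale.
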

\begin{proof}
Let $\by^*\in\cs^{n-1}$ be an accumulation point of the sequence $\{\by^k\}$. Then there exists a subsequence  $\{\by^{k_t}\}$ converging to  $\by^*$. We note that $\chi_{\cs^{n-1}}$ is lsc. Thus,
\BE\label{eq:phi-inf}
\liminf_{t\to\infty}\chi_{\cs^{n-1}}(\by^{k_t})\ge\chi_{\cs^{n-1}}(\by^*).
\EE
From Step 1 and Step 2 Algorithm \ref{pgm} we have
\[
\by^{k+1}=\argmin_{\by\in\Rn} \big\{f(\by^k)+\langle\by-\by^k , \nabla f(\by^k) \rangle +\frac{1}{2\alpha_{k+1}}\|\by-\by^k\|^2+\la_{k+1}\|\by\|_1+\chi_{\cs^{n-1}}(\by)\big\}
\]
and thus
\begin{eqnarray}\label{eq:ykystar}
&& \langle\by^{k+1}-\by^k , \nabla f(\by^k) \rangle +\frac{1}{2\alpha_{k+1}}\|\by^{k+1}-\by^k\|^2+\la_{k+1}\|\by^{k+1}\|_1+\chi_{\cs^{n-1}}(\by^{k+1}) \nonumber\\
&\le&  \langle\by^*-\by^k , \nabla f(\by^k) \rangle +\frac{1}{2\alpha_{k+1}}\|\by^*-\by^k\|^2+\la_{k+1}\|\by^*\|_1+\chi_{\cs^{n-1}}(\by^*).
\end{eqnarray}
By hypothesis, $\lim_{k\to\infty}\lambda_k=\lambda_*$.  By Algorithm \ref{pgm}, we know that $\alpha_{k+1}\in [\gamma_1, \alpha_0]$ for all $k\ge 0$ and $\{\by^k\}$ is bounded. Using Lemma \ref{lem:mon} iii) we have
\[
\lim\limits_{k\to\infty}\|\by^{k+1}-\by^k\|=0.
\]
Hence, taking $k=k_t$ in \eqref{eq:ykystar} and let $t\to\infty$ and using the continuity of $f$ we obtain
\[
\limsup_{t\to\infty}\chi_{\cs^{n-1}}(\by^{k_t}) \le\chi_{\cs^{n-1}}(\by^*).
\]
This, together with \eqref{eq:phi-inf}, implies that
\[
\lim_{t\to\infty}\chi_{\cs^{n-1}}(\by^{k_t}) =\chi_{\cs^{n-1}}(\by^*).
\]
Therefore,
\begin{eqnarray}\label{eq:F-lim}
\lim_{t\to\infty}F(\la_{k_t},\by^{k_t}) &=& \lim_{t\to\infty}\left( f(\by^{k_t}) +\la_{k_t}\|\by^{k_t}\|_1+\chi_{\cs^{n-1}}(\by^{k_t}) \right) \nonumber\\
&=& f(\by^*) +\la_*\|\by^*\|_1+\chi_{\cs^{n-1}}(\by^*) = F(\la_*,\by^*).
\end{eqnarray}

On the other hand, using Lemma \ref{lem:mon} iii) we have
\[
\lim\limits_{k\to\infty}\|\by^{k+1}-\by^k\|=0.
\]
By Lemma \ref{lem:grad} we have $\bq^k\in \partial_\by  F(\la_{k+1},\by^{k+1})$ and
\[
\|\bq^k\| \le\left(L_f+\frac{1}{\gamma_1}\right) \|\by^{k+1}-\by^{k}\|
\]
and thus
\[
\lim_{k\to\infty}\bq^k={\bf 0}.
\]
Since $\partial_\by F(\cdot)$ is closed, we know that ${\bf 0}\in \partial_\by F(\la_*,\by^*)$, i.e., $\by^*$ is a critical point of $F(\la_*,\cdot)$.
\end{proof}

In the rest of this section, we show the the sequence $\{\by^k\}$ generated by Algorithm \ref{pgm} converges under some assumptions. We first give some necessary results.

Let $\cl(\by^0)$ be the set of all accumulation points of the sequence $\{\by^k\}$ generated by Algorithm \ref{pgm}, i.e.,
\[
\cl(\by^0)=\{\by^*\in\cs^{n-1}\;| \; \mbox{there exists a subsequence $\{\by^{k_t}\}$ such that $\lim_{t\to\infty}\by^{k_t}=\by^*$}\}.
\]

On the set $\cl(\by^0)$, we have the following result.
\begin{lemma}\label{lem:ly0}
Let $\{(\by^k,\alpha_k,\la_k)\}$  be the sequence generated by Algorithm \ref{pgm} with $\lim_{k\to\infty}\lambda_k$ $=\lambda_*$. Then
$\cl(\by^0)$ is a nonempty and compact set and the function $F(\la_*,\cdot)$ is finite and constant on $\cl(\by^0)$.
\end{lemma}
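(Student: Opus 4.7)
The plan is to handle the three assertions (nonempty, compact, finite and constant) in sequence, with most of the work recycled from the proof of Theorem \ref{thm:gc}.

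First I would verify nonemptiness and compactness by a standard topological argument. Since $\{\by^k\}\subset \cs^{n-1}$ and the unit sphere $\cs^{n-1}$ is compact in $\Rn$, Bolzano--Weierstrass gives at least one convergent subsequence, so $\cl(\by^0)\neq\emptyset$. For compactness, I would write
\[
\cl(\by^0)=\bigcap_{k\ge 0}\overline{\{\by^j\,:\, j\ge k\}},
\]
an intersection of closed sets, hence closed. Being a closed subset of the compact set $\cs^{n-1}$, it is itself compact.

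Next I would show that $F(\la_*,\cdot)$ takes a common finite value on $\cl(\by^0)$. The key observation is that the sequence $\{F(\la_k,\by^k)\}$ converges. Indeed, by Lemma \ref{lem:mon} ii) it is monotonically decreasing, and since $f\ge 0$, $g(\la_k,\cdot)\ge 0$, and $\chi_{\cs^{n-1}}\ge 0$, it is bounded below by $0$. Denote the limit by $F_*$; in particular $F_*$ is finite.

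Finally, for an arbitrary $\by^*\in\cl(\by^0)$, pick a subsequence $\{\by^{k_t}\}$ converging to $\by^*$. I would reproduce the continuity/semicontinuity argument from the proof of Theorem \ref{thm:gc}: $f$ and $\|\cdot\|_1$ are continuous, $\chi_{\cs^{n-1}}$ is lsc (so $\liminf_t \chi_{\cs^{n-1}}(\by^{k_t})\ge \chi_{\cs^{n-1}}(\by^*)$), while the one-step optimality comparison \eqref{eq:ykystar} together with $\lim_k\|\by^{k+1}-\by^k\|=0$ (from Lemma \ref{lem:mon} iii)) and $\la_k\to\la_*$ yields $\limsup_t \chi_{\cs^{n-1}}(\by^{k_t})\le \chi_{\cs^{n-1}}(\by^*)$. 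This is exactly the derivation of \eqref{eq:F-lim}, giving
\[
F(\la_*,\by^*)=\lim_{t\to\infty} F(\la_{k_t},\by^{k_t})=F_*.
\]
Since $\by^*\in\cl(\by^0)$ was arbitrary, $F(\la_*,\cdot)\equiv F_*$ on $\cl(\by^0)$.

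The only nonroutine step is the constancy assertion, and even there the main subtlety (namely that $\chi_{\cs^{n-1}}(\by^{k_t})\to \chi_{\cs^{n-1}}(\by^*)$ despite the discontinuity of $\chi_{\cs^{n-1}}$) has already been carried out in Theorem \ref{thm:gc}; here it just needs to be invoked for each $\by^*\in\cl(\by^0)$ rather than for a single accumulation point. So I would write this lemma as essentially a corollary of Theorem \ref{thm:gc} combined with the monotonicity and lower boundedness of $\{F(\la_k,\by^k)\}$.
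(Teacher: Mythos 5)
Your proposal is correct and follows essentially the same route as the paper: compactness of $\cs^{n-1}$ for the first assertion, monotonicity plus nonnegativity of $F$ (Lemma \ref{lem:mon} ii)) to get a finite limit $F_*$, and a re-run of the argument leading to \eqref{eq:F-lim} to identify $F(\la_*,\by^*)=F_*$ for each accumulation point. The only cosmetic difference is that the paper inserts the intermediate identity $F(\la_*,\by^k)=F(\la_k,\by^k)-(\la_k-\la_*)\|\by^k\|_1$ before invoking the \eqref{eq:F-lim} argument, which you bypass by applying that argument directly to the subsequence.
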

\begin{proof}
It is obvious that  $\cl(\by^0)$ is nonempty and compact since the sequence $\{\by^k\}$ is bounded. By Lemma \ref{lem:mon} ii) we know that the sequence $\{F(\la_k,\by^k)\}$ is monotonically decreasing and bounded below. Hence, the sequence $\{F(\la_k,\by^k)\}$ converges to a limit $F_*$, i.e., $\lim_{k\to\infty}F(\la_k,\by^k) =F_*$. By the definition of $F$ as in \eqref{def:fg}  we have
\BE\label{eq:F-lay}
F(\la_*,\by^k)=F(\la_k,\by^k)-(\lambda_k-\lambda_*)\|\by^k\|_1.
\EE
By Lemma \ref{lem:mon} i) we have $\lim_{k\to\infty}\lambda_k=\lambda_*$. Since the sequence $\{\by^k\}$ is bounded, it follows from \eqref{eq:F-lay} that
\[
\lim_{k\to\infty}F(\la_*,\by^k)=F_*.
\]
For any $\by^*\in\cl(\by^0)$, there exists a subsequence $\{\by^{k_t}\}$ such that $\lim_{t\to\infty}\by^{k_t}=\by^*$. By using the similar proof of \eqref{eq:F-lim} we obtain
\[
F(\la_*,\by^*)=F_*
\]
Hence, the function $F(\la_*,\cdot)$ is finite and constant on $\cl(\by^0)$.
\end{proof}

On the convergence of the sequence $\{\by^k\}$ generated by Algorithm \ref{pgm}, we have the following theorem, whose proof is similar to \cite[Theorem 1]{BS14}. We give the proof here for the sake of completeness.
\begin{theorem}
Let $\{(\by^k,\alpha_k,\la_k)\}$  be the sequence generated by Algorithm \ref{pgm}. If $\la_k=\la_*$ for all $k$ sufficiently large, then the sequence $\{\by^k\}$ converges to a critical point of $F(\la_*,\cdot)$.
\end{theorem}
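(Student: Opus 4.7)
The plan is to place the iteration within the Kurdyka--{\L}ojasiewicz (KL) convergence framework of Bolte et al.\ \cite{BS14} once the regularization parameter has stabilized. Let $k_0$ be an index such that $\la_k=\la_*$ for every $k\ge k_0$, and work entirely with the tail $\{\by^k\}_{k\ge k_0}$. Three of the four standard ingredients of the KL framework are already in place: the sufficient-decrease property
\[
F(\la_*,\by^k)-F(\la_*,\by^{k+1})\ge \tfrac{\gamma_2}{2}\|\by^{k+1}-\by^k\|^2
\]
from Lemma \ref{lem:mon} ii); the relative-error bound $\bq^k\in\partial_\by F(\la_*,\by^{k+1})$ with $\|\bq^k\|\le (L_f+1/\gamma_1)\|\by^{k+1}-\by^k\|$ from Lemma \ref{lem:grad}; and the continuity-along-subsequences property $\lim_k F(\la_*,\by^k)=F_*$ with $F(\la_*,\cdot)\equiv F_*$ on $\cl(\by^0)$, established in Lemma \ref{lem:ly0}.

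The remaining ingredient is that $F(\la_*,\cdot)=f+\la_*\|\cdot\|_1+\chi_{\cs^{n-1}}$ is a KL function. I would obtain this from semi-algebraicity: $f$ is polynomial, $\|\cdot\|_1$ is piecewise linear, and $\cs^{n-1}$ is defined by the single polynomial equation $\by^T\by=1$, so each summand is semi-algebraic, hence so is $F(\la_*,\cdot)$, and every proper lsc semi-algebraic function is a KL function on its domain. Combining this with the compactness of $\cl(\by^0)$ and the uniformized KL lemma of \cite{BS14}, there exist $\varepsilon,\eta>0$ and a concave desingularizing function $\varphi$ such that
\[
\varphi'\bigl(F(\la_*,\by)-F_*\bigr)\cdot \dist\bigl({\bf 0},\partial_\by F(\la_*,\by)\bigr)\ge 1
\]
holds for every $\by$ in an $\varepsilon$-neighborhood of $\cl(\by^0)$ satisfying $F_*<F(\la_*,\by)<F_*+\eta$. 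Since $\dist(\by^k,\cl(\by^0))\to 0$ and $F(\la_*,\by^k)\downarrow F_*$, this KL inequality applies at $\by=\by^k$ for all sufficiently large $k$, except in the degenerate case where $F(\la_*,\by^{k_1})=F_*$ for some $k_1$: then the sufficient-decrease estimate forces $\by^{k+1}=\by^k$ for all $k\ge k_1$ and the claim is trivial.

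With these four ingredients, the argument follows \cite[Theorem~1]{BS14} essentially verbatim. Setting $\Delta_k:=\varphi\bigl(F(\la_*,\by^k)-F_*\bigr)-\varphi\bigl(F(\la_*,\by^{k+1})-F_*\bigr)$, concavity of $\varphi$ together with sufficient decrease and the subgradient bound yields, after an AM--GM step, a telescoping estimate of the form
\[
2\|\by^{k+1}-\by^k\|\le \|\by^k-\by^{k-1}\|+C\,\Delta_k
\]
for all $k$ large, with $C>0$ depending only on $L_f,\gamma_1,\gamma_2$. Summing over $k$ gives $\sum_k\|\by^{k+1}-\by^k\|<\infty$, so $\{\by^k\}$ is Cauchy and converges to some $\by^*\in\cs^{n-1}$; Theorem \ref{thm:gc} then identifies $\by^*$ as a critical point of $F(\la_*,\cdot)$. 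The main technical obstacle will be the uniformization step: one has to verify that the uniform KL lemma can be invoked on the compact set $\cl(\by^0)$ even though $F(\la_*,\cdot)$ is only lsc (the $\chi_{\cs^{n-1}}$ term is not continuous). The hypotheses for this, however, reduce exactly to the constancy of $F(\la_*,\cdot)$ on $\cl(\by^0)$ and the convergence $F(\la_*,\by^k)\to F_*$, both already supplied by Lemma \ref{lem:ly0}.
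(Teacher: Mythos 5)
Your proposal follows essentially the same route as the paper's proof: the degenerate case where $F(\la_*,\by^{k})$ hits $F_*$ is dispatched via the sufficient-decrease inequality, and otherwise the uniformized KL lemma on the compact set $\cl(\by^0)$ is combined with Lemma \ref{lem:mon} ii), Lemma \ref{lem:grad}, and concavity of the desingularizing function to obtain the telescoping bound $2\|\by^{k+1}-\by^k\|\le\|\by^k-\by^{k-1}\|+C\,\Delta_k$, summability of the step lengths, the Cauchy property, and identification of the limit as a critical point via Theorem \ref{thm:gc}. The one point where you go beyond the paper is your explicit verification that $F(\la_*,\cdot)$ is a KL function via semi-algebraicity of $f$, $\|\cdot\|_1$, and $\cs^{n-1}$ --- a hypothesis of Lemma \ref{app:gkl} that the paper invokes without checking --- which is a welcome addition rather than a deviation.
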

\begin{proof}
From Algorithm \ref{pgm} we observe that $\by^k\in\cs^{n-1}$ for all $k\ge 0$ and thus the sequence $\{\by^k\}$ is bounded. Let $\by^*$ be an accumulation point of $\{\by^k\}$. Then, there exists a subsequence $\{\by^{k_t}\}$ such that $\lim_{t\to\infty}\by^{k_t}=\by^*$. By  Theorem \ref{thm:gc}, we know that $\by^*$ is  a critical point of $F(\la_*,\cdot)$. Following the similar proof of the first part of Theorem \ref{thm:gc} we have
\begin{eqnarray}\label{eq:F-lim2}
\lim_{t\to\infty}F(\la_{k_t},\by^{k_t}) = F(\la_*,\by^*).
\end{eqnarray}
If there exists an integer $\hat{k}>0$ such that $F(\la_{\hat{k}},\by^{\hat{k}}) = F(\la_*,\by^*)$, then it follows from Lemma \ref{lem:mon} ii) that $\by^{\hat{k}+1}=\by^{\hat{k}}$. By the induction, we can easily show that $\by^k=\by^{\hat{k}}$ for all $k\ge \hat{k}$ and thus $\lim_{k\to\infty}\by^{k}=\by^*$. Therefore, the conclusion holds.

We now suppose $F(\la_k,\by^k)\neq F(\la_*,\by^*)$ for all $k\ge 0$. By Lemma \ref{lem:mon} ii) we know that the sequence $\{F(\la_k,\by^k)\}$ is monotonically decreasing and thus  $F(\la_k,\by^k)> F(\la_*,\by^*)$ for all $k\ge 0$. For any $\eta>0$, it follows from \eqref{eq:F-lim2} that for all $k$ sufficiently large, $\lambda^k=\lambda_*$ and
\[
F(\la_*,\by^k)< F(\la_*,\by^*)+\eta
\]
It is obvious that $\lim_{k\to\infty}\dist(\by^k, \cl(\by^0))=0$. Therefore,  for any $\epsilon>0$, we have for all $k$ sufficiently large,
\[
\dist(\by^k, \cl(\by^0))<\epsilon.
\]
Thus, for all $k$ sufficiently large,
\[
\by^k\in\{\by\in\cs^{n-1}\; |\: \dist(\by,\cl(\by^0))<\epsilon\}\cap\{ F(\la_*,\by^*) <F(\la_*,\by)<F(\la_*,\by^*)+\eta\}.
\]
It follows from Lemma \ref{lem:ly0} that  $\cl(\by^0)$ is compact and $F(\la_*,\cdot)$ is constant on  $\cl(\by^0)$. Hence, using Lemma \ref{app:gkl} with $\cc=\cl(\by^0)$ and $h=F(\la_*,\cdot)$ we have, for all $k$ sufficiently large,
\[
\xi'\big(F(\la_*,\by^k)-F(\la_*,\by^*)\big)\; \dist(0, \partial_\by F(\la_*,\by^k))\geq 1,
\]
where  $\xi$ is a concave function defined as in Definition \ref{app:kli}.
By Lemma \ref{lem:grad} we have for all $k$ sufficiently large,
\[
\dist(0, \partial_\by F(\la_*,\by^{k}))\le\left(L_f+\frac{1}{\gamma_1}\right) \|\by^{k}-\by^{k-1}\|
\]
and thus, for all $k$ sufficiently large,
\BE\label{eq:phi-diff}
\xi'\big(F(\la_*,\by^k)-F(\la_*,\by^*)\big) \ge \frac{\gamma_1}{1+\gamma_1L_f}{\|\by^{k}-\by^{k-1}\|}^{-1}.
\EE
By  Lemma \ref{app:gkl}, we know that $\xi$ is concave. Then we have for all $k$ sufficiently large,
\begin{eqnarray}\label{eq:phi-cav}
&&\xi\big(F(\la_*,\by^k)-F(\la_*,\by^*) \big)-\xi\big(F(\la_*,\by^{k+1})-F(\la_*,\by^*)\big) \nonumber \\
&\ge& \xi'\big(F(\la_*,\by^k)-F(\la_*,\by^*) \big)\big(F(\la_*,\by^k)-F(\la_*,\by^{k+1})\big).
\end{eqnarray}
For arbitrary integers $r,s>0$, let
\[
\Theta_{r,s}:= \xi\big(F(\la_*,\by^r)-F(\la_*,\by^*)\big)- \xi\big(F(\la_*,\by^{s})-F(\la_*,\by^*)\big)
\quad\mbox{and}\quad \Psi:=\frac{2(1+\gamma_1L_f)}{\gamma_2\gamma_1}.
\]
From Lemma \ref{lem:mon} ii), \eqref{eq:phi-diff} and \eqref{eq:phi-cav} we have for all $k$ sufficiently large,
\[
\Theta_{k,k+1}\geq\frac{{\|\by^{k+1}-\by^{k}\|}^2}{\Psi\|\by^k-\by^{k-1}\|}.
\]
Thus, for all $k$ sufficiently large,
\[
{\|\by^{k+1}-\by^{k}\|}^2\leq \Psi\Theta_{k,k+1}\|\by^k-\by^{k-1}\|.
\]
This, together with the fact that $2\sqrt{ab}\le a+b$ for all $a,b\ge 0$, yields, for all $k$ sufficiently large,
\[
2\|\by^{k+1}-\by^{k}\| \le \|\by^k-\by^{k-1}\|+\Psi\Theta_{k,k+1}\le \|\by^k-\by^{k-1}\|+\Psi\xi(F(\la_*,\by^{\ell+1})-F(\la_*,\by^*)).
\]
Therefore, using the definition of $\Theta_{k,k+1}$ and noting that $\chi_{\cs^{n-1}}\geq0$, we have  for all $\ell$ sufficiently large,
\begin{eqnarray*}
2\sum\limits_{j=\ell+1}^{k}\|\by^{j+1}-\by^{j}\|&\leq&\sum\limits_{j=\ell+1}^{k}\|\by^{j}-\by^{j-1}\|
+\Psi\sum\limits_{j=\ell+1}^{k}\Theta_{j,j+1}\\
&\leq&\sum\limits_{j=\ell+1}^{k}\|\by^{j+1}-\by^{j}\|+\|\by^{l+1}-\by^{l}\| +\Psi\sum\limits_{j=\ell+1}^{k}\Theta_{j,j+1}\\
&=&\sum\limits_{j=\ell+1}^{k}\|\by^{j+1}-\by^{j}\|+\|\by^{l+1}-\by^{l}\| +\Psi\Theta_{\ell+1,k+1}
\end{eqnarray*}
\begin{eqnarray*}
&\le&\sum\limits_{j=\ell+1}^{k}\|\by^{j+1}-\by^{j}\|+\|\by^{l+1}-\by^{l}\| +\Psi\xi\big(F(\la_*,\by^{\ell+1})-F(\la_*,\by^*)\big).
\end{eqnarray*}
This implies that, for all $\ell$ sufficiently large,
\[
\sum\limits_{j=\ell+1}^{k}\|\by^{j+1}-\by^{j}\|\leq\|\by^{l+1}-\by^{l}\| +\Psi\xi\big(F(\la_*,\by^{\ell+1})-F(\la_*,\by^*)\big).
\]
Taking $k\to\infty$ we have
\[
\sum\limits_{k=1}^{\infty}\|\by^{k+1}-\by^{k}\|\le\infty.
\]
This shows that the sequence $\{\by^k\}$ is cauchy sequence and thus the sequence $\{\by^k\}$ is convergent. Therefore,  the sequence $\{\by^k\}$ converge to $\by^*$, which is a critical point of $F(\la_*,\cdot)$.
\end{proof}

\section{Extensions}\label{sec4}
In this section, we extend the geometric proximal gradient method proposed in Section \ref{sec2} to some sparse least squares regression with rectangular stochastic matrix constraint and the inverse eigenvalue problem for stochastic matrices.

\subsection{Rectangular stochastic matrix constrained least squares regression} \label{sec41}
A matrix $X=(x_{ij})\in\Rnr$ is called  a rectangular column (row) stochastic matrix if all its entries are nonnegative and all its column (row) sum equals one, i.e., ${\bf 1}_n^TX={\bf 1}_r^T$ (or $X{\bf 1}_r= {\bf 1}_n$).

In the following, we consider the following singly rectangular stochastic matrix constrained least squares regression:
\BE\label{lsr-sm}
\begin{array}{lc}
\min\limits_{X\in\Rnr}  &  \displaystyle \frac{1}{2}\|AX-B\|_F^2 \\[2mm]
\mbox{s.t.} &{\bf 1}_n^TX={\bf 1}_r^T, \quad X\ge 0,
 \end{array}
\EE
where $A\in\Rmn$ and $X\ge 0$ means that $X$ is a entry-wise nonnegative matrix. Such problem arises in sparse hyperspectral unmixing \cite{LB08}.

In \cite{IB12}, Iordache et al. gave the ADMM method for solving the following  total variation  regularization problem:
\[
\begin{array}{lc}
\min\limits_{X\in\Rnr}  &  \displaystyle \frac{1}{2}\|AX-B\|_F^2 +\la \|X\|_1+\la_{TV}TV(X) \\[2mm]
\mbox{s.t.} &X\ge 0,
 \end{array}
\]
where $\la,\la_{TV}>0$ are two regularization parameters and $TV(X):=\sum_{i,j}\|\bx_i-\bx_j\|_1$ with $\bx_j$ being the $j$-th column of $X$. However, the constraint ${\bf 1}_n^TX={\bf 1}_r^T$ is not involved.
In \cite{MI12}, Moussaoui et al. presented a primal-dual interior point method for solving the following regularized model :
\[
\begin{array}{lc}
\min\limits_{X\in\Rnr}  &  \displaystyle \frac{1}{2}\|AX-B\|_F^2 +\la R(X) \\[2mm]
\mbox{s.t.} &X\ge 0,
 \end{array}
\]
where $\la>0$ is the regularization parameter and $R(X)$ is used to estimate the abundance maps in hyperspectral imaging.

To find a sparse solution to problem  \eqref{lsr-sm}, we reformulate problem  \eqref{lsr-sm} as an nonconvex and nonsmooth minimization problem over a Riemannian manifold. We first note that
\[
\{X\in\Rnr\; |\; \mbox{${\bf 1}_n^TX={\bf 1}_r^T$, $X\ge 0$}\}=
\{Y\odot Y \in\Rnr\; |\; \mbox{$Y\in\ob(n,r)$}\},
\]
where the set $\ob(n,r)$ is the rectangular oblique manifold:
\[
\ob(n,r):=\{Y \in\Rnr\; |\; \mbox{$\diag(Y^TY)=I_r$}\}.
\]
Instead of  problem  \eqref{lsr-sm}, one may consider the following $\ell_1$ regularization problem:
\BE\label{lsr-L1R}
\begin{array}{lc}
\min\limits_{Y\in\Rnr}  &  \displaystyle \frac{1}{2}\|A(Y\odot Y)-B\|_F^2+\la\|Y\|_1\\[2mm]
\mbox{s.t.} & Y\in\ob.
\end{array}
\EE

Let
\BE\label{def:pq-lsr}
p(Y):=\frac{1}{2}\|A(Y\odot Y)-B\|_F^2,\quad
q(\lambda,Y):=\lambda \|Y\|_1,\quad
G(\lambda,Y):=p(Y)+q(\lambda,Y)+\Phi(Y),
\EE
where $\Phi$ is a characteristic function of  $\ob(n,r)$ defined by
\[
\Phi(Y)=\left\{
\begin{array}{cl}
0,         &   Y\in\ob(n,r),\\
+\infty,  & \mbox{otherwise}.\\
\end{array} \right.
\]

Then one may apply Algorithm \ref{pgm} to problem \eqref{lsr-L1R}, where in each iteration, one needs to find the explicit expression of $\overline{Y}^{k}\in\ob(n,r)$ such that
\BE\label{lsr-ques3}
\overline{Y}^{k}=\argmin_{Y\in\ob(n,r)} \Big\{p(Y^k)+\big\langle Y-Y^k , \nabla p(Y^k) \big\rangle_F +\frac{1}{2\alpha}\|Y-Y^k\|_F^2+{\lambda_k}\|Y\|_1\Big\}.
\EE
For any integer $k\ge 0$, let
\[
Y^k:=[\by_1^k,\by_2^k,\ldots,\by_r^k]
\quad\mbox{and}\quad
\nabla p(Y^k):=[\nabla p_1(Y^k),\nabla p_2(Y^k),\ldots, \nabla p_r(Y^k)].
\]
Thus $Y^k\in\ob(n,r)$ if and only if $\by_j^k\in\cs^{n-1}$ for $j=1,\ldots,n$. Hence, we can solve \eqref{lsr-ques3} by solving
\BE\label{ques32}
\overline{\by}_j^{k}=\argmin_{\by_j\in\cs^{n-1}} \Big\{\big\langle \by_j-\by_j^k , \nabla p_1(Y^k) \big\rangle +\frac{1}{2\alpha}\|\by_j-\by_j^k\|^2+{\lambda_k}\|\by_j\|_1\Big\},
\EE
for $j=1,\ldots, r$, which have explicit expressions as $\overline{\by}^{k}$ defined in \eqref{ques1} of Algorithm \ref{pgm}.

In addition, it is obvious that for any $Y\in\ob(n,r)$,
\BE\label{p:grad}
\nabla p(Y)=2\big(A^TA(Y\odot Y)-A^TB\big)\odot Y.
\EE
As in section \ref{sec32}, we can establish the global Lipschitz continuity of $\nabla p$ as follows.
\begin{lemma}\label{p:lc-lsr}
Let $\cb_2(0):=\{Y=[\by_1,\by_2,\ldots,\by_r]\in\Rnr\; |\; \mbox{$\|\by_j\|\le 1$ for $j=1,\ldots,n$}\}$ be the  closed subset in $\Rnn$. Then, for the function $p$ defined in  \eqref{def:pq-lsr}, there exists a constant $L_p>0$ such that
\[
\|\nabla p(Y_1)-\nabla p(Y_2)\|_F\le L_p\|Y_1-Y_2\|_F,\quad  \forall Y_1,Y_2\in\cb_2(0),
\]
where $L_p=6n\|A^TA\|_2+2\|A^TB\|_F$.
\end{lemma}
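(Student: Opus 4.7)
The plan is to mirror the argument used for Lemma \ref{f:lc}, replacing vector norms by Frobenius norms and upgrading the elementary inequalities so they apply to matrices. First I would substitute the explicit gradient formula \eqref{p:grad}, write
\[
\nabla p(Y_1)-\nabla p(Y_2)=2\bigl[(A^TA(Y_1\odot Y_1))\odot Y_1-(A^TA(Y_2\odot Y_2))\odot Y_2\bigr]-2(A^TB)\odot(Y_1-Y_2),
\]
and split the right-hand side via the triangle inequality into a quadratic part and the linear term $2\|(A^TB)\odot(Y_1-Y_2)\|_F$. The quadratic part would be further decomposed, exactly as in the proof of Lemma \ref{f:lc}, using
\[
(A^TA(Y_1\odot Y_1))\odot Y_1-(A^TA(Y_2\odot Y_2))\odot Y_2=(A^TA(Y_1\odot Y_1))\odot(Y_1-Y_2)+(A^TA(Y_1\odot Y_1-Y_2\odot Y_2))\odot Y_2,
\]
followed by the telescoping identity $Y_1\odot Y_1-Y_2\odot Y_2=Y_1\odot(Y_1-Y_2)+(Y_1-Y_2)\odot Y_2$.

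Next I would bound each of the four resulting summands using three ingredients: the entrywise Cauchy--Schwarz inequality $\|Z_1\odot Z_2\|_F\le\|Z_1\|_F\|Z_2\|_F$, obtained by summing $z_{1,ij}^2z_{2,ij}^2\le (\sum_{ij}z_{1,ij}^2)(\sum_{ij}z_{2,ij}^2)$; the spectral/Frobenius compatibility $\|A^TAZ\|_F\le\|A^TA\|_2\|Z\|_F$; and the boundedness on $\cb_2(0)$, namely $\|Y\|_F^2=\sum_{j=1}^r\|\by_j\|^2\le r\le n$, which also yields $\|Y\odot Y\|_F\le\|Y\|_F^2\le n$. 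Applied in the right order these give
\[
\|(A^TA(Y_1\odot Y_1))\odot(Y_1-Y_2)\|_F\le n\|A^TA\|_2\|Y_1-Y_2\|_F
\]
and two analogous bounds for the other two Hadamard-product terms, plus $\|(A^TB)\odot(Y_1-Y_2)\|_F\le\|A^TB\|_F\|Y_1-Y_2\|_F$. Summing and absorbing the factor of $2$ produces the claimed constant $L_p=6n\|A^TA\|_2+2\|A^TB\|_F$.

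The main obstacle is simply the bookkeeping: one has to keep track of which factor is being estimated by the spectral norm of $A^TA$, which by the Frobenius norm of the Hadamard factor, and which by the boundedness on $\cb_2(0)$, so that each $\|Y_i\|_F^2$ or $\|Y_i\odot Y_i\|_F$ is bounded by $n$ rather than being left loose. Conceptually no new idea beyond Lemma \ref{f:lc} is needed; the vector inequality $\|\bz_1\odot\bz_2\|\le\|\bz_1\|\|\bz_2\|$ used there is literally the same inequality for the Frobenius norm, so the same chain of triangle inequalities carries over verbatim column-by-column.
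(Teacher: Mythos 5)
Your proposal is correct and is exactly what the paper intends: the paper gives no separate proof of this lemma, stating only that it follows ``as in Section 3.2,'' i.e., by repeating the chain of triangle inequalities from Lemma \ref{f:lc} with Frobenius norms, which is precisely what you do, and your bookkeeping does reproduce the stated constant $L_p=6n\|A^TA\|_2+2\|A^TB\|_F$. The only point worth flagging is that your bound $\|Y\|_F^2\le r\le n$ tacitly assumes $r\le n$ (otherwise the natural constant is $6r\|A^TA\|_2+2\|A^TB\|_F$), but the paper itself glosses over this in the same way.
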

Finally, one may develop the global convergence of Algorithm \ref{pgm} for problem \eqref{lsr-L1R} as in section \ref{sec33}.

\subsection{Inverse eigenvalue problem for stochastic matrices}
In this subsection, we consider  the inverse eigenvalue problem for stochastic matrices of the reconstruction of a sparse row stochastic matrix from the prescribed stationary distribution vectors. Such problem arises in the inverse problem of reconstructing  a Markov Chain from the prescribed steady-state probability  distribution \cite{CC09} and the construction of probabilistic Boolean networks  \cite{CJ12}. It  was also mentioned in \cite[p. 104]{CG05} as a stochastic inverse eigenvalue problem.

The inverse eigenvalue problem for stochastic matrices with the prescribed stationary distribution vectors $\{\bfd_i\in\Rn\}_{i=1}^m$ aims to reconstruct a  matrix $X\in\Rnn$ such that
\[
\bfd_i^TX=\bfd_i^T,\quad X{\bf 1}_n={\bf 1}_n,\quad i=1,\ldots, m,\quad X\ge 0.
\]
Alternatively, we consider  the following least square regression problem:
\BE\label{siep-min}
\begin{array}{lc}
\min\limits_{X\in\Rnn}  &  \displaystyle \frac{1}{2}\|DX-D\|_F^2  \\[2mm]
\mbox{s.t.} &X{\bf 1}_n={\bf 1}_n,\quad X\ge 0,
\end{array}
\EE
where $D:=[\bfd_1,\ldots,\bfd_m]^T$.

To find a sparse solution to problem \eqref{siep-min},  as in section \ref{sec41}, we consider the following $\ell_1$ regularization problem:
\BE\label{siep-L1R}
\begin{array}{lc}
\min\limits_{Y\in\Rnn}  &  \displaystyle \frac{1}{2}\|D(Y\odot Y)-D\|_F^2+\la\|Y\|_1\\[2mm]
\mbox{s.t.} & Y\in\ob.
\end{array}
\EE
where the set $\ob$ is the oblique manifold \cite{AMS08}:
\[
\ob:=\{Y \in\Rnn\; |\; \mbox{$\diag(YY^T)=I_n$}\}.
\]

Let
\BE\label{def:pq}
p(Y):=\frac{1}{2}\|D(Y\odot Y)-D\|_F^2,\quad
q(\lambda,Y):=\lambda \|Y\|_1,\quad
G(\lambda,Y):=p(Y)+q(\lambda,Y)+\Phi(Y),
\EE
where $\Phi$ is a characteristic function of  $\ob$ defined by
\[
\Phi(Y)=\left\{
\begin{array}{cl}
0,         &   Y\in\ob,\\
+\infty,  & \mbox{otherwise}.\\
\end{array} \right.
\]
We can use Algorithm \ref{pgm}  to  problem \eqref{siep-L1R}, where we need to find the explicit expression of $\overline{Y}^{k}\in\ob$ defined as in \eqref{lsr-ques3}. For any integer $k\ge 0$, let
\[
Y^k:=[\by_1^k,\by_2^k,\ldots,\by_n^k]^T
\quad\mbox{and}\quad
\nabla p(Y^k):=[\nabla p_1(Y^k),\nabla p_2(Y^k),\ldots, \nabla p_n(Y^k)]^T.
\]
Then
$Y^k\in\ob$ if and only if $\by_j^k\in\cs^{n-1}$ for $j=1,\ldots,n$, where  $\by_j^k\in\cs^{n-1}$ is determined by \eqref{ques32}, which has an explicit expression.

As in section \ref{sec32}, we can establish the  global Lipschitz continuity of $\nabla p$, where the function $p$ is  defined in \eqref{def:pq}. We have for any $Y\in\ob$,
\BE\label{p:grad}
\nabla p(Y)=2\big(D^TD(Y\odot Y)-D^TD\big)\odot Y.
\EE

By using the similar proof of  Lemma \ref{p:lc-lsr}, we have the following result on the global Lipschitz continuity of $\nabla p$.
\begin{lemma}\label{p:lc}
Let $\cb_3(0):=\{Y=[\by_1,\by_2,\ldots,\by_n]^T\in\Rnn\; |\; \mbox{$\|\by_j\|\le 1$ for $j=1,\ldots,n$}\}$ be the  closed subset in $\Rnn$. Then, for the function $p$ defined in  \eqref{def:pq}, there exists a constant $L_p>0$ such that
\[
\|\nabla p(Y_1)-\nabla p(Y_2)\|_F\le L_p\|Y_1-Y_2\|_F,\quad  \forall Y_1,Y_2\in\cb_3(0),
\]
where $L_p=6n\|D^TD\|_2+2\|D^TD\|_F$.
\end{lemma}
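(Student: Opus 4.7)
The plan is to mirror the telescoping argument used in Lemma \ref{f:lc} and Lemma \ref{p:lc-lsr}, adapting it to the square matrix case with the Frobenius norm. The only structural difference from Lemma \ref{p:lc-lsr} is that here $\cb_3(0)$ is defined by row-wise unit bounds (since $Y=[\by_1,\ldots,\by_n]^T$), but the crucial consequence $\|Y\|_F^2 = \sum_{j=1}^n \|\by_j\|^2 \le n$ for $Y\in\cb_3(0)$ is exactly the same.

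First I would substitute the explicit gradient \eqref{p:grad} into $\nabla p(Y_1)-\nabla p(Y_2)$ and use the triangle inequality to peel off the linear part, producing
\[
\|\nabla p(Y_1)-\nabla p(Y_2)\|_F \le 2\bigl\|(D^TD(Y_1\odot Y_1))\odot Y_1 - (D^TD(Y_2\odot Y_2))\odot Y_2\bigr\|_F + 2\bigl\|(D^TD)\odot(Y_1-Y_2)\bigr\|_F.
\]
Then, for the first (nonlinear) term I would add and subtract $(D^TD(Y_1\odot Y_1))\odot Y_2$ and next use the identity $Y_1\odot Y_1 - Y_2\odot Y_2 = Y_1\odot(Y_1-Y_2) + (Y_1-Y_2)\odot Y_2$ to split it into three summands, each of which exhibits the factor $Y_1-Y_2$ inside a single Hadamard product, exactly as in the proof of Lemma \ref{f:lc}.

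Each of the three resulting summands is then bounded by chaining (i) the Hadamard inequality $\|U\odot V\|_F \le \|U\|_F\|V\|_F$, (ii) the spectral bound $\|(D^TD)M\|_F \le \|D^TD\|_2\|M\|_F$, and (iii) the bound $\|Y_i\|_F^2 \le n$ on $\cb_3(0)$; the resulting contribution is $n\|D^TD\|_2 \|Y_1-Y_2\|_F$ per summand. The residual linear term is handled directly by $\|(D^TD)\odot(Y_1-Y_2)\|_F \le \|D^TD\|_F\|Y_1-Y_2\|_F$. Summing these four contributions with the overall factor of $2$ yields the stated constant $L_p = 6n\|D^TD\|_2+2\|D^TD\|_F$.

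The main obstacle, as in Lemma \ref{p:lc-lsr}, is only bookkeeping: one must consistently apply the Frobenius Hadamard inequality in the right order and track where the factor $n$ (from the number of rows, each of norm at most one) accumulates, so that the three quadratic pieces each produce a clean $n\|D^TD\|_2$ and not a larger constant. Since the manipulations are identical to those of Lemma \ref{p:lc-lsr}, no genuinely new estimate is needed; the proof is essentially a transcription in which $\|A^TA\|_2$, $\|A^TB\|_F$ are replaced by $\|D^TD\|_2$, $\|D^TD\|_F$, and column-wise unit normalization is replaced by row-wise unit normalization.
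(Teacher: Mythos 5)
Your proposal is correct and follows exactly the route the paper intends: the paper gives no separate proof of this lemma, stating only that it follows ``by using the similar proof of Lemma \ref{p:lc-lsr}'' (itself a transcription of Lemma \ref{f:lc}), and your telescoping decomposition, the chain of Hadamard/spectral Frobenius bounds, and the bookkeeping of the factor $n$ via $\|Y\|_F^2\le n$ and $\|Y\odot Y\|_F\le\|Y\|_F^2$ reproduce precisely that argument and the stated constant $L_p=6n\|D^TD\|_2+2\|D^TD\|_F$.
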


Therefore, one may solve problem \eqref{siep-L1R} via Algorithm \ref{pgm}, whose global convergence can be established as in section \ref{sec33}.
\section{Numerical experiments}\label{sec5}
In this section, we report the numerical performance of Algorithm \ref{pgm}  for solving the sparse least squares regression problem \eqref{pbn-r}. To illustrate the effectiveness of our method, we compare the proposed algorithm with the  projection-based gradient descent method (PG) \cite{WW15} for solving problem \eqref{intro} and the ADMM method  \cite{BF10}  for solving problem \eqref{pro:mp}. Our numerical tests are implemented by running {\tt MATLAB R2019a} on a personal laptop (Intel Core i7-8559U @ 2.7GHz, 16 GB RAM).

In our numerical tests, for Algorithm \ref{pgm}, the  PG method, and the ADMM method, the initial guess is set to be  $\bx^0={\bf 1}_n/n$ and $\by^0={\bf 1}_n/\sqrt{n}$ ($X^0=(x_{ij}^0)$ with  $x_{1j}^0=\cdots= x_{nj}^0=1/\sqrt{n}$ for $j=1,\ldots,r$ and $Y^0=(y_{ij}^0)$ with $y_{1j}^0=\cdots= y_{nj}^0=1/\sqrt{n}$ for $j=1,\ldots,r$ for problem \eqref{lsr-sm}, or $X^0=(x_{ij}^0)$ with $x_{ij}^0=1/n$ for $i,j=1,\ldots,n$ and $Y^0=(y_{ij}^0)$ with $y_{ij}^0=1/\sqrt{n}$ for  $i,j=1,\ldots,n$ for problem \eqref{siep-min}, the stopping criterion is set to be
\[
\frac{\|\by^k\odot\by^k-\by^{k-1}\odot\by^{k-1}\|}{\|\by^{k-1}\odot\by^{k-1}\|}\le{\tt tol}
\quad\mbox{and}\quad
\frac{\|{\bf x}^k-{\bf x}^{k-1}\|}{\|{\bf x}^{k-1}\|}\le{\tt tol},
\]
or
\[
\frac{\|Y^k\odot Y^k-Y^{k-1}\odot Y^{k-1}\|_F}{\|Y^{k-1}\odot Y^{k-1}\|_F}\le{\tt tol}
\quad\mbox{and}\quad
\frac{\|X^k-X^{k-1}\|_F}{\|X^{k-1}\|_F}\le{\tt tol},
\]
and the largest number of iterations is set to be {\tt ITmax}, where ``{\tt tol}" is a prescribed tolerance.
For Algorithm \ref{pgm}, we also set  $\rho_1=\rho_3=0.9$, $\rho_2=0.6$, $\gamma_2=10^{-5}$, $\gamma_1=0.9/(L_f+\gamma_2)>0$,  $\delta_1=4.0$, and $\delta_2=10^{-4}$.
Let `{\tt nnz.}', `{\tt ct.}',  `{\tt kkt.}', and `{\tt obj.}',  denote the number of nonzeros in the computed solution, the total computing time in seconds, the KKT residual of respective models, and the objective function values at the final iterates of the corresponding algorithms, accordingly.

We first consider the following  Lasso problem  as in \cite{LL19}.
\begin{example}\label{ex51}
Let $\bb\in\Rm$ be generated by the linear regression model:
\[
\bb=A\bx^*+\nu\bn, \quad \bn\sim N({\bf 0}, I),
\]
where the rows of $A\in \Rmn$ are generated by a  Gaussian distribution $N({\bf 0}, I_n)$ and
the regression coefficient vector $\bx^*=|\bar{\bx}|/\|\bar{\bx}\|_1$ with $\bar{\bx}\in\Rn$ being a sparse normally distributed random vector  generated by the {\tt MATLAB} built-in function {\tt sprandn} with $5\%$  uniformly distributed nonzero entries. We set $\nu=0.001\|A\bx\|/\|\bn\|$. We report our numerical results for $m=20j$ and $n=300j$ with $j=1, 2, 3, 4, 5$.
\end{example}

In Table \ref{table51}, we report the numerical results for  Example \ref{ex51} with ${\tt tol}=10^{-4}$ and ${\tt ITmax}=2000$. We observe from  Table \ref{table51} that both the PG method and the ADMM method need less running time than  Algorithm \ref{pgm}, where the PG method is the most efficient method in terms of the running time and the ADMM method is the most effective method in terms of the  objective function value. However, Algorithm \ref{pgm} can find a much  sparser solution than the PG method and the ADMM method with acceptable running time, where  Algorithm \ref{pgm} with fixed $\la$ can find the most sparse solution with a relatively  large objective function value while Algorithm \ref{pgm} can reach a good  tradeoff between  sparsity and objective function value.

\begin{table}[!ht]\renewcommand{\arraystretch}{1.0} \addtolength{\tabcolsep}{1.0pt}
\begin{center}{\small
  \begin{tabular}[c]{|c|c|c|c|c|}     \hline
\multicolumn{5}{|c|}{$j=1$ and $\|\bx^*\|_0=15$}\\ \hline
Alg. & {\tt nnz. }  & {\tt kkt.}  & {\tt obj.}  & {\tt ct.}   \\ \hline
PG & $75$           &$4.3348\times 10^{-3}$  &  $9.3194\times 10^{-7} $                                  &$0.0053$            \\ \hline
ADMM &                          $299$           &$9.0578\times 10^{-4}$  &                                         $1.3590\times 10^{-9} $                                  &$0.0046$            \\ \hline
Alg. \ref{pgm} with fixed $\lambda=10^{-2}$ &     $15$            &$5.44691\times 10^{-2}$  &                                         $9.7324\times 10^{-4} $                                  &$1.1132$            \\ \hline
Alg. \ref{pgm} with $\lambda^0=10^{-2}$ & $33$                 &$1.3475\times 10^{-3}$  &                                         $7.7540\times 10^{-7} $                                  &$0.6467$            \\ \hline
\multicolumn{5}{|c|}{$j=2$ and $\|\bx^*\|_0=30$}\\ \hline
Alg. & {\tt nnz. }  & {\tt kkt.}  & {\tt obj.}  & {\tt ct.}   \\ \hline
PG &                              $225$           &$5.2341\times 10^{-3}$  &                                         $2.7488\times 10^{-7} $                                  &$0.0059$            \\ \hline
ADMM &                                     $600$              &$2.0430\times 10^{-6}$  &                                         $3.0117\times 10^{-15} $                                  &$0.0207$            \\ \hline
Alg. \ref{pgm} with fixed $\lambda=0.7071\times 10^{-2}$ &        $33$             &$2.3521\times 10^{-2}$  &                                         $1.7426\times 10^{-3} $                                  &$1.5774$            \\ \hline
Alg. \ref{pgm} with $\lambda^0=0.7071\times 10^{-2}$ &   $52$                 &$4.9274\times 10^{-4}$  &                                         $2.2965\times 10^{-7} $                                  &$0.7816$            \\ \hline

\multicolumn{5}{|c|}{$j=3$ and $\|\bx^*\|_0=45$}\\ \hline
Alg. & {\tt nnz. }  & {\tt kkt.}  & {\tt obj.}  & {\tt ct.}   \\ \hline
PG &                        $325$           &$6.2055\times 10^{-3}$  &                                         $2.1367\times 10^{-7} $                                  &$0.0084$            \\ \hline
ADMM &                             $900$                          &$1.9741\times 10^{-6}$  &                                         $2.4255\times 10^{-15} $                                  &$0.0270$            \\ \hline
Alg. \ref{pgm} with fixed $\lambda=0.5774\times 10^{-2}$ &  $47$                &$3.9036\times 10^{-3}$  &                                         $9.5213\times 10^{-4} $                                  &$1.7178$            \\ \hline
Alg. \ref{pgm} with $\lambda^0=0.5774\times 10^{-2}$ & $90$                 &$4.8763\times 10^{-4}$  &                                         $3.8463\times 10^{-8} $                                  &$0.8099$            \\ \hline

\multicolumn{5}{|c|}{$j=4$ and $\|\bx^*\|_0=57$}\\ \hline
Alg. & {\tt nnz. }  & {\tt kkt.}  & {\tt obj.}  & {\tt ct.}   \\ \hline
PG &                          $578$           &$5.7746\times 10^{-3}$  &                                         $8.4740\times 10^{-8} $                                  &$0.0178$            \\ \hline
ADMM & $1200$                                       &$3.3031\times 10^{-7}$  &                                         $4.9351\times 10^{-17} $                                  &$0.0382$            \\ \hline
Alg. \ref{pgm} with fixed $\lambda=0.5\times 10^{-2}$ &    $61$             &$2.2108\times 10^{-3}$  &                                         $1.2072\times 10^{-3} $                                  &$3.1507$            \\ \hline
Alg. \ref{pgm} with $\lambda^0=0.5\times 10^{-2}$ & $123$                 &$4.0730\times 10^{-4}$  &                                         $5.2864\times 10^{-8} $                                  &$0.8082$            \\ \hline

\multicolumn{5}{|c|}{$j=5$ and $\|\bx^*\|_0=72$}\\ \hline
Alg. & {\tt nnz. }  & {\tt kkt.}  & {\tt obj.}  & {\tt ct.}   \\ \hline
PG &                     $494$           &$1.0113\times 10^{-2}$  &                                         $3.5405\times 10^{-7} $                                  &$0.0138$            \\ \hline
ADMM & $1500$                                       &$1.7913\times 10^{-6}$  &                                         $1.1327\times 10^{-15} $                                  &$0.0494$            \\ \hline
Alg. \ref{pgm} with fixed $\lambda=0.5\times 10^{-2}$ & $81$                                       &$2.7140\times 10^{-2}$  &                                         $1.0152\times 10^{-3} $                                  &$4.0649$            \\ \hline
Alg. \ref{pgm} with $\lambda^0=0.5\times 10^{-2}$ & $143$                 &$3.8337\times 10^{-4}$  &                                         $8.3343\times 10^{-8} $                                  &$2.1601$            \\ \hline
\end{tabular} }
\end{center}
\caption{Numerical results for Example \ref{ex51}.}\label{table51}
\end{table}

Next, we consider a numerical example in hyperspectral applications \cite{LL19}.
\begin{example}\label{ex52}
Suppose the simulated data $\bb\in\Rm$ be generated by
\[
\bb=A\bx^*+\bn,
\]
where $A$ is a $224\times 440$  Gaussian random matrix with zero-mean unit variance, the noise $\bn$ is generated  by zero-mean i.i.d. Gaussian sequences of random variables, and the true fractional abundance vector $\bx^*=|\bar{\bx}|/\|\bar{\bx}\|_1$ with $\bar{\bx}\in\Rn$ being a sparse normally distributed random vector  generated by {\tt sprandn} with $2\%$  uniformly distributed nonzero entries. The signal-to-noise ratio (SNR) is defined by
\[
SNR=10\log_{10}\frac{\E(\|A\bx^*\|^2)}{\E(\|\bn \|^2)},
\]
where the expectation $\E(\cdot)$ is approximated with sample mean over ten runs.
\end{example}

Table \ref{table52-2} display the numerical results for Example \ref{ex52} with different SNRs, $\lambda=10^{-2}$, $\la^0=3.0\times 10^{-2}$, ${\tt tol}=10^{-5}$ and ${\tt ITmax}=3000$, where RSNR means the reconstruction SNR, which is defined by
\[
RSNR=10\log_{10}\frac{\E(\|\bx^*\|^2)}{\E(\|\bx^*-\bx^\#\|^2)}.
\]
Here, $\bx^\#$ denotes the computed solution.

We see from Table \ref{table52-2} that the proposed algorithm provides a high-precision solution.

\begin{table}[!h]\renewcommand{\arraystretch}{1.0} \addtolength{\tabcolsep}{1.0pt}
\begin{center}{\small
  \begin{tabular}[c]{|c|l|c|l|c|l|c|l|c|}     \hline
& \multicolumn{2}{|c|}{PG}  & \multicolumn{2}{|c|}{ADMM} & \multicolumn{2}{|c|}{Alg. \ref{pgm} with fixed $\la$} & \multicolumn{2}{|c|}{Alg. \ref{pgm}} \\ \hline
{\tt SNR~(dB)} & {\tt  RSNR~(dB)} & {\tt ct.} & {\tt RSNR} & {\tt ct.} & {\tt RSNR} & {\tt ct.} &
 {\tt RSNR} & {\tt ct.}                     \\ \hline
40           &$47.9440$ & $0.0038$  &$46.9500$ & $0.0145$ & ${\bf 56.3501}$ & $0.9834$                                      &${\bf 54.8873}$ & $3.1782$ \\ \hline
50           &$58.3520$ & $0.0030$  &$58.1053$ & $0.0087$ &${\bf 66.4631}$ & $0.9139$                                      &${\bf 65.2922}$ & $3.9925$      \\ \hline
60           &$66.9302$ & $0.0025$    &$63.9141$ & $0.0047$  &${\bf 75.0466}$ & $1.0746$                                       &${\bf 73.3215}$ & $3.1640$  \\ \hline
\end{tabular} }
\end{center}
\caption{Numerical results for  Example \ref{ex52} with different SNRs (averaged over ten runs).} \label{table52-2}
\end{table}

We now focus on the following two numerical examples on the inverse problem of reconstructing a probabilistic Boolean network (PBN) from a prescribed transition probability matrix \cite{CC11,CJ12,DP19,WW15}.

\begin{example}\label{ex53}
We consider a three-gene network and the prescribed transition probability matrix is given by
$$ P_1=
\left(
\begin{array}{cccccccc}
    0.1200  &       0  &  0.6000  &  0.4200   &      0   &      0   &      0   &      0  \\
    0.2800  &       0  &       0  &  0.1800   &      0   &      0   &      0   &      0  \\
         0  &  0.4000  &       0  &       0   & 0.4000   & 0.1800   &      0   &      0  \\
         0  &       0  &       0  &       0   &      0   & 0.4200   &      0   & 0.6000  \\
    0.1800  &       0  &  0.4000  &  0.2800   &      0   &      0   &      0   &      0  \\
    0.4200  &       0  &       0  &  0.1200   &      0   &      0   &      0   &      0  \\
         0  &  0.6000  &       0  &       0   & 0.6000   & 0.1200   &      0   &      0  \\
         0  &       0  &       0  &       0   &      0   & 0.2800   & 1.0000   & 0.4000
\end{array}
\right).$$
In this PBN, there are $1024$ Boolean networks (BNs).
\end{example}

\begin{example} \label{ex54}
We consider a three-gene network and the prescribed transition probability matrix is given by
$$ P_2=
\left(
\begin{array}{cccccccc}
    0.5672  &  0.4328  &  0.2881   &      0  &  0.1447  &       0  &  0.4328  &       0 \\
         0  &       0  &  0.1447   &      0  &  0.2881  &       0  &       0  &       0 \\
         0  &       0  &       0   &      0  &       0  &       0  &       0  &  0.3776 \\
         0  &       0  &       0   & 0.4328  &       0  &       0  &       0  &  0.1896 \\
    0.4328  &  0.5672  &  0.3376   &      0  &  0.1896  &       0  &  0.5672  &       0 \\
         0  &       0  &  0.1896   &      0  &  0.3776  &       0  &       0  &       0 \\
         0  &       0  &       0   &      0  &       0  &  0.6657  &       0  &  0.2881 \\
         0  &       0  &       0   & 0.5672  &       0  &  0.3343  &       0  &  0.1447
\end{array}
\right).$$
In this PBN, there are $2048$ BNs.
\end{example}

In Figures \ref{fig53}--\ref{fig54} and Tables \ref{table53-1}--\ref{table54-1}, we report the numerical results for Examples \ref{ex53}--\ref{ex54} with $\lambda=\lambda^0=10^{-2}$, ${\tt tol}=10^{-5}$, and ${\tt ITmax}=3000$. From  Figures \ref{fig53}--\ref{fig54}, we observe that the proposed algorithm generates a much sparser solution than the other two methods. From Tables \ref{table53-1}--\ref{table54-1}, we also see that  Algorithm \ref{pgm} can achieve a good  tradeoff between  sparsity and objective function value.

\begin{figure}[!h]
 \centering
\includegraphics[width=0.45\textwidth]{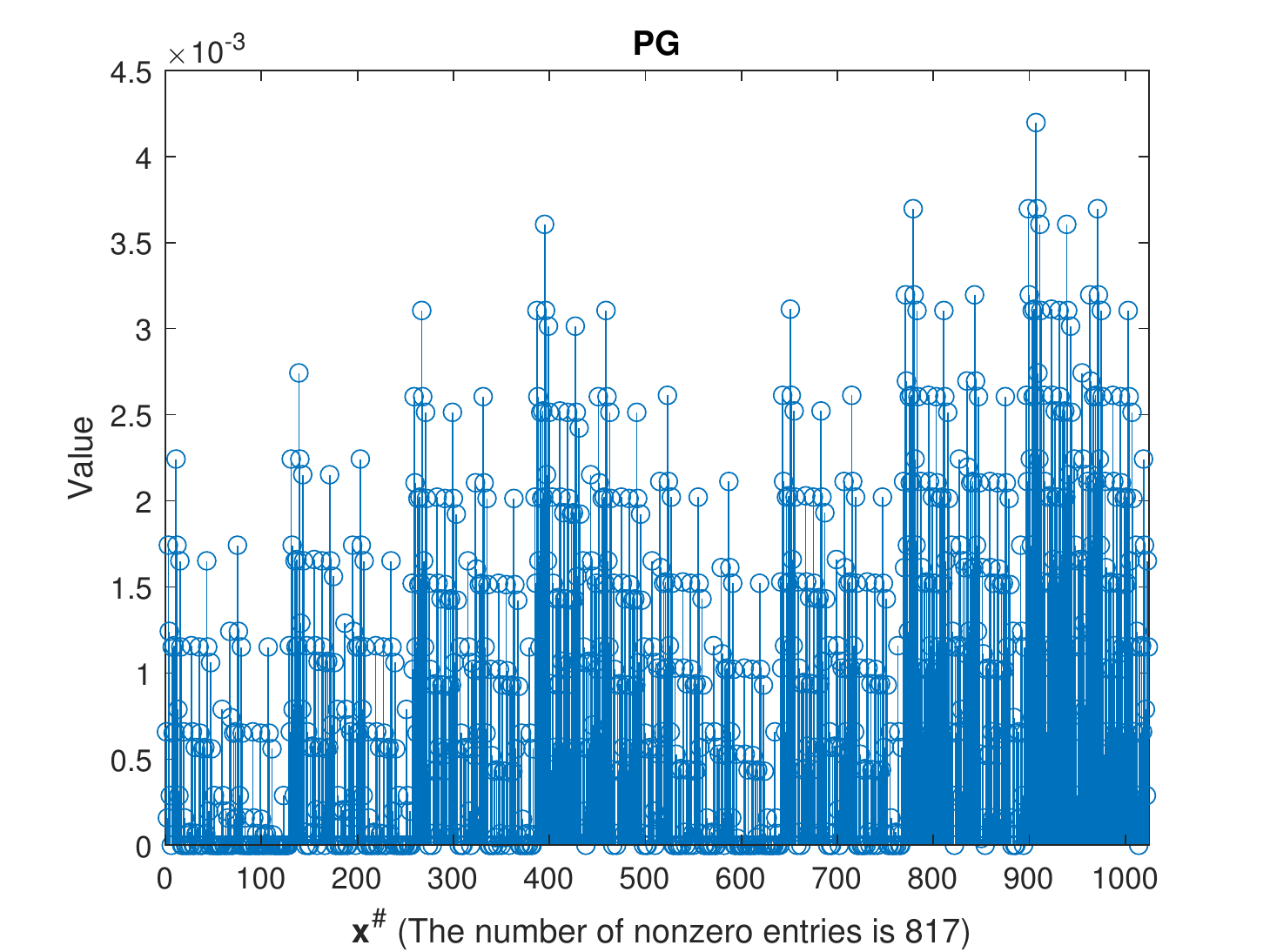}
\includegraphics[width=0.45\textwidth]{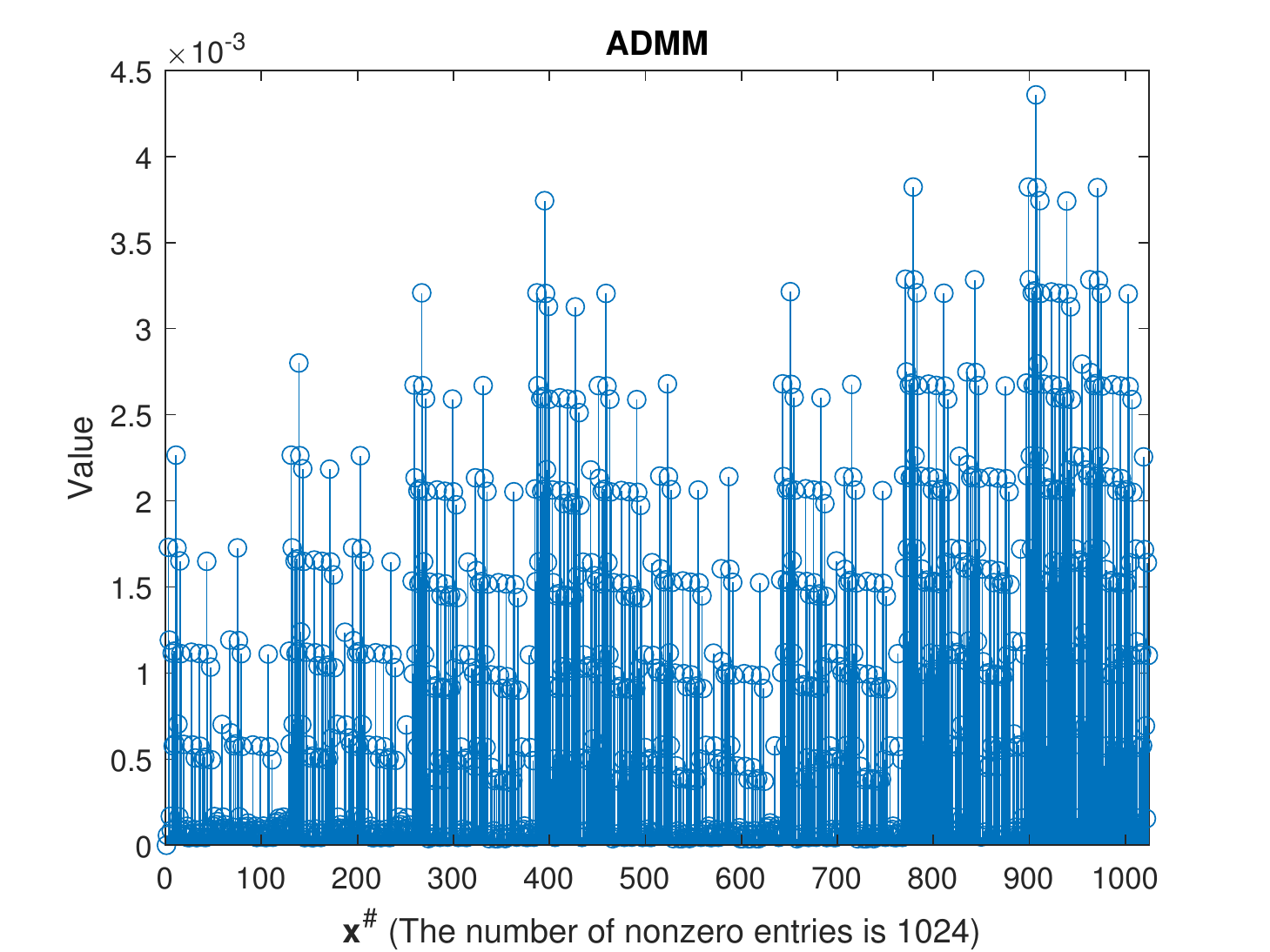}
\includegraphics[width=0.45\textwidth]{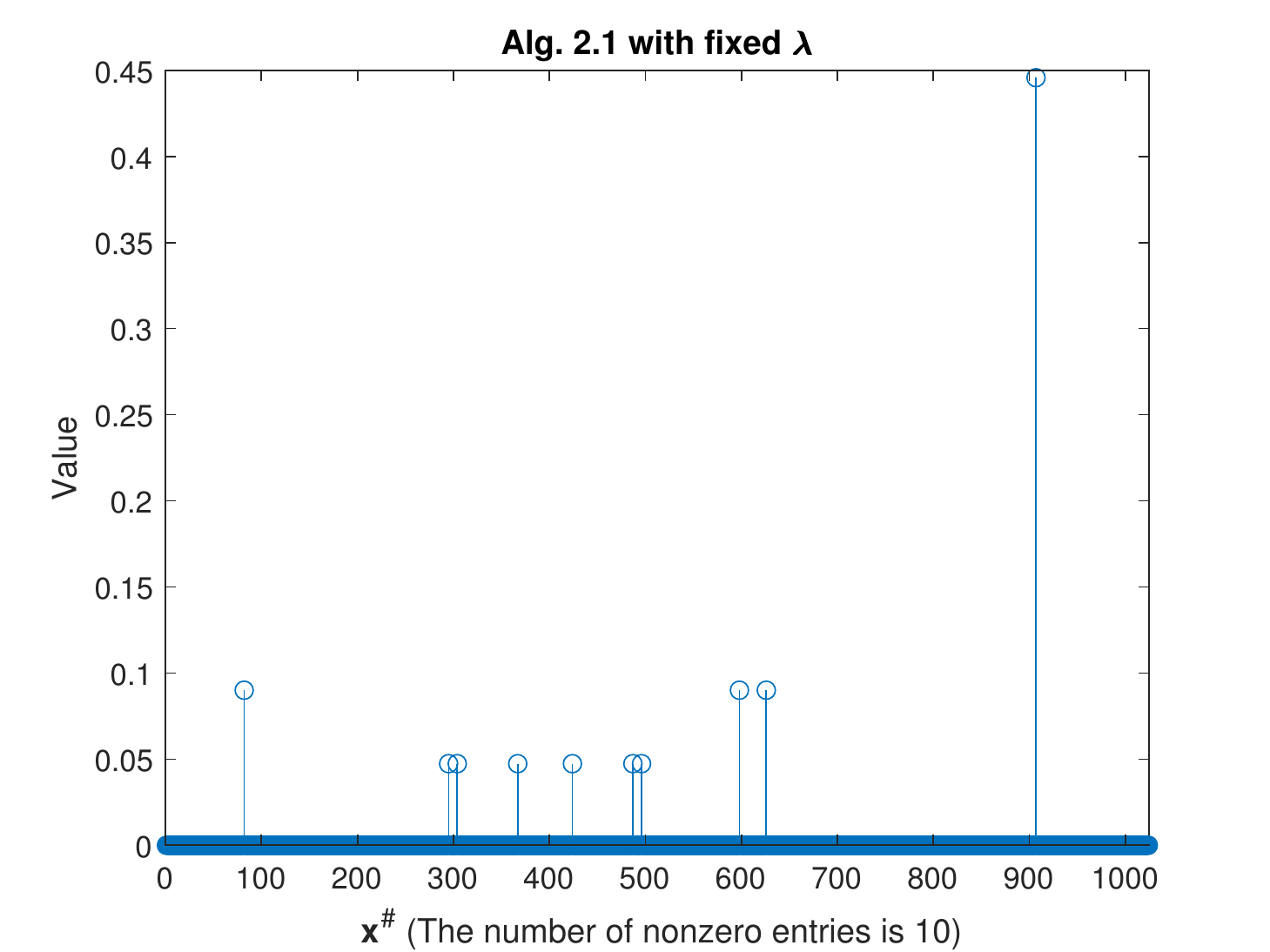}
\includegraphics[width=0.45\textwidth]{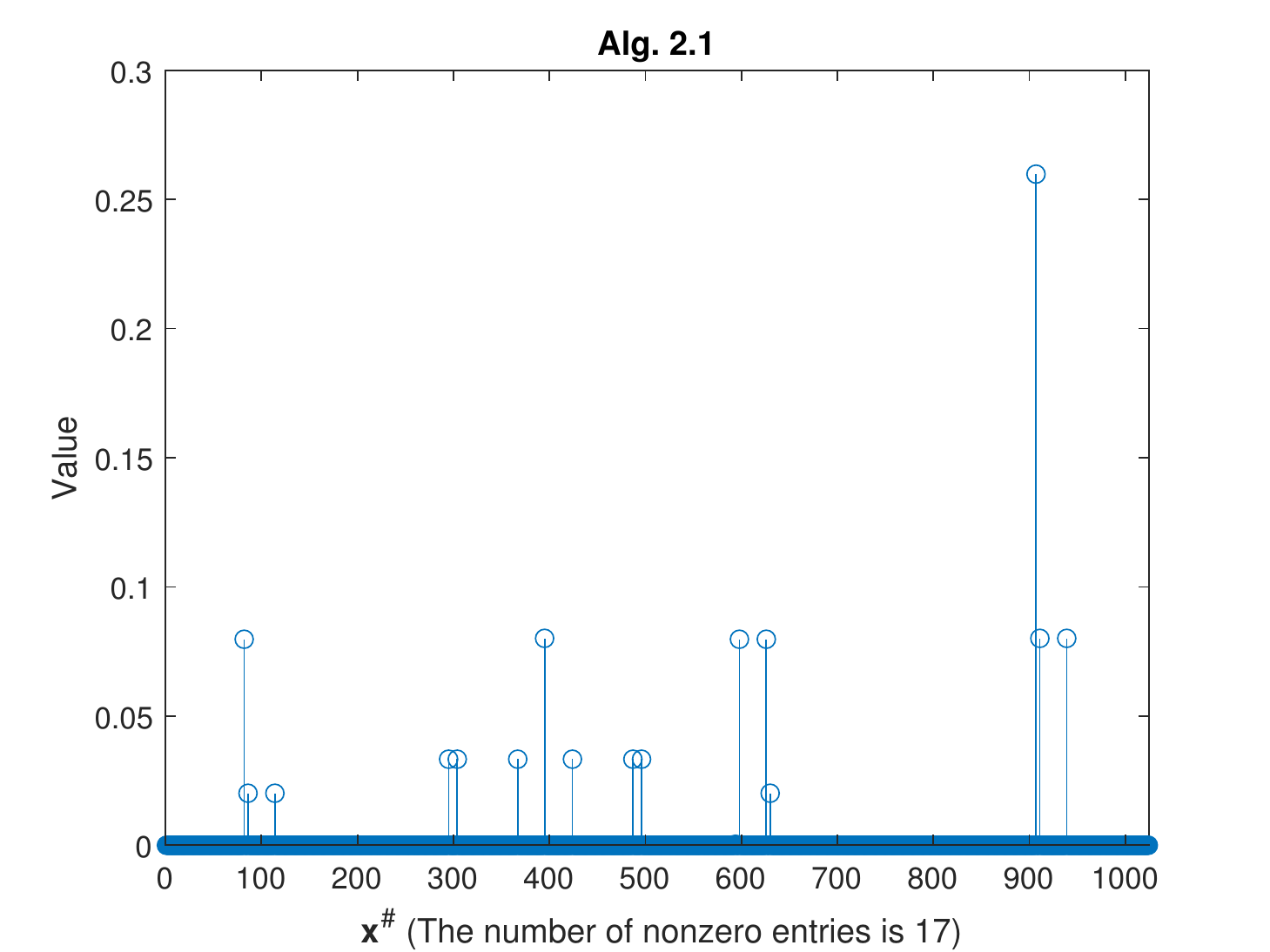}
 \caption{The probability distribution $\bx^\#$ for Example \ref{ex53}.}. \label{fig53}
 \end{figure}
 \begin{table}[!h]\renewcommand{\arraystretch}{1.0} \addtolength{\tabcolsep}{1.0pt}
\begin{center}{\small
  \begin{tabular}[c]{|c|c|c|c|c|}     \hline
& {\tt nnz.} & {\tt kkt.} & {\tt obj.}  & {\tt ct.}   \\ \hline
PG  &                          $817$           &$6.2553\times 10^{-5}$  &                                         $1.2039\times 10^{-11}$                                  &$0.0050$            \\ \hline
ADMM &                          $1024$           &$4.0931\times 10^{-8}$  &                                         $2.8067\times 10^{-18}$                                  &$0.0167$            \\ \hline
Alg. \ref{pgm} with fixed~$\lambda=10^{-2}$ &     $10$            &$4.2592\times 10^{-2}$  &                                         $2.9591\times 10^{-3} $                                  &$0.6303$            \\ \hline
Alg. \ref{pgm} with $\lambda^0=10^{-2}$ & $17$                 &$3.6060\times 10^{-5}$  &                                         $1.6691 \times 10^{-9}$                                  &$0.3647$            \\ \hline
\end{tabular} }
\end{center}
\caption{Numerical results for Example \ref{ex53}.}  \label{table53-1}
\end{table}
\begin{figure}[!h]
 \centering
\includegraphics[width=0.45\textwidth]{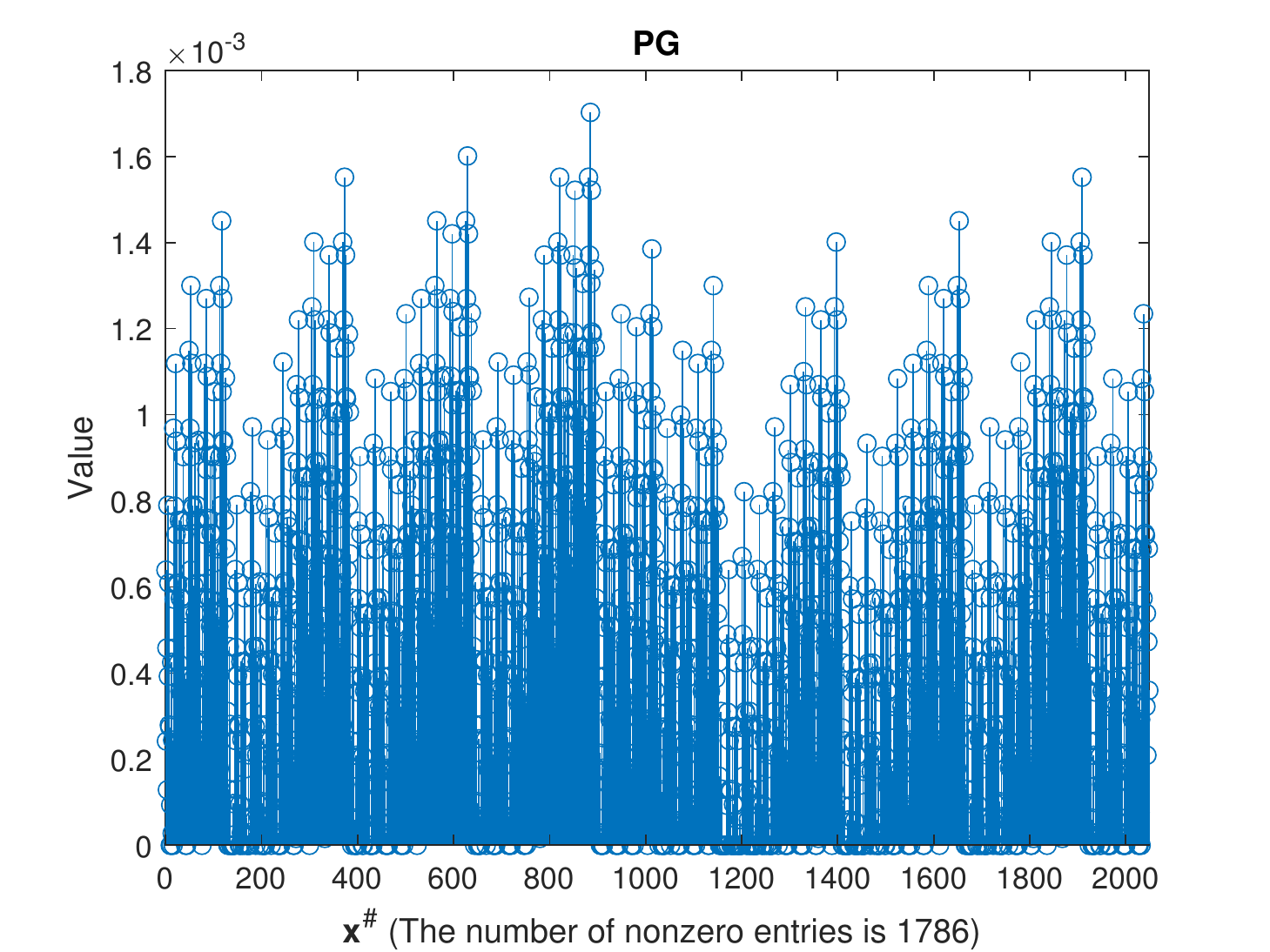}
\includegraphics[width=0.45\textwidth]{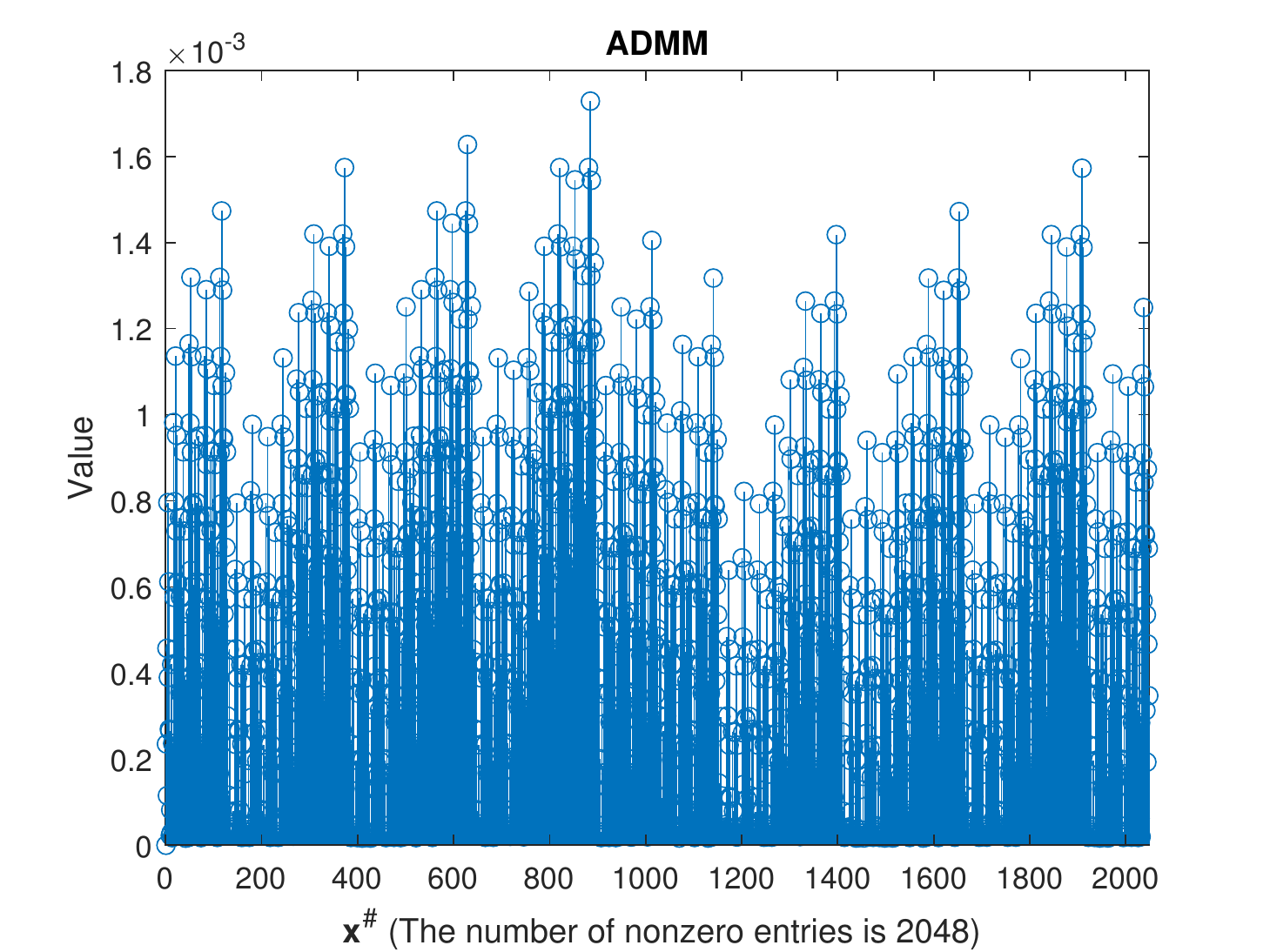}
\includegraphics[width=0.45\textwidth]{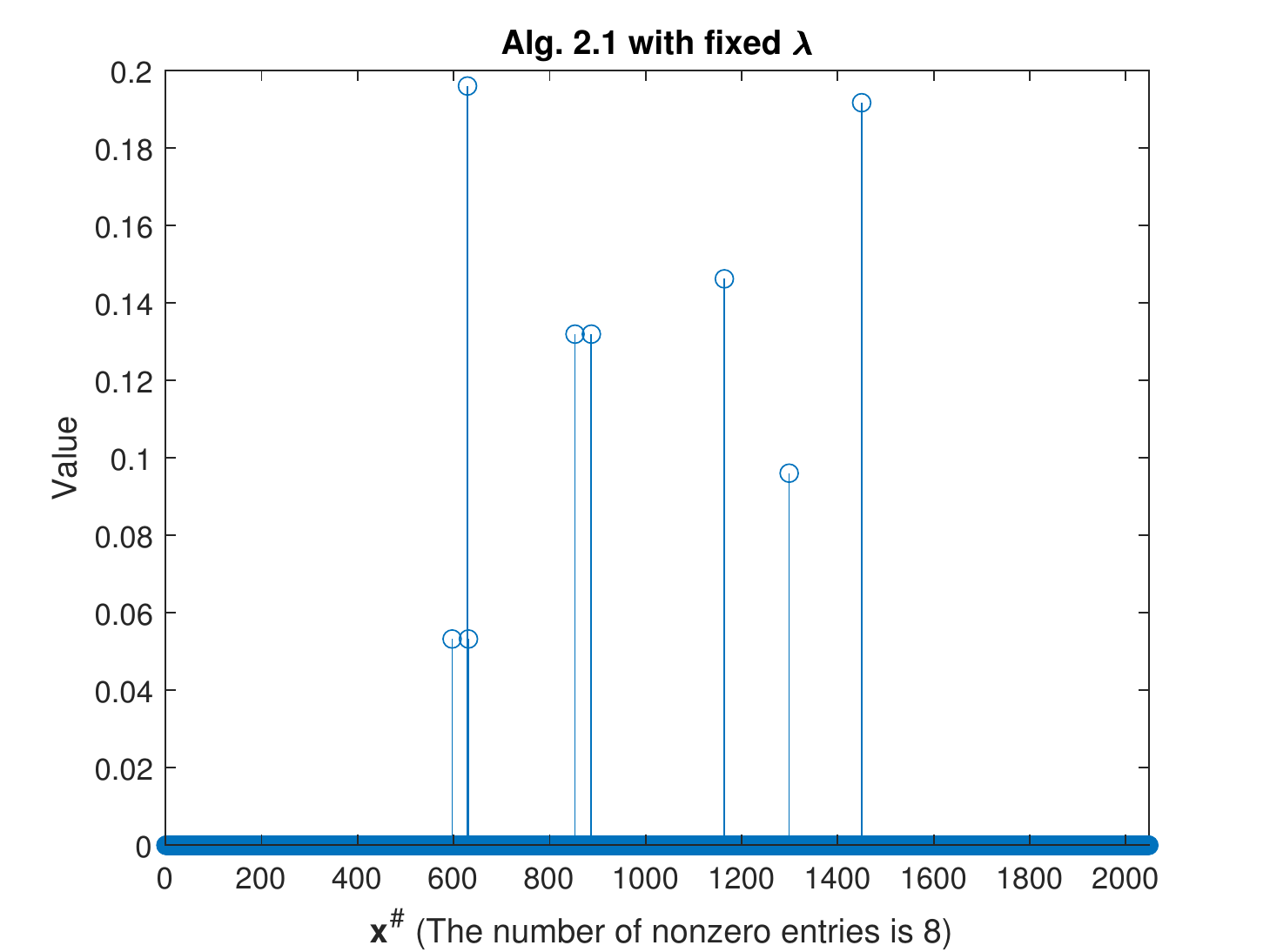}
\includegraphics[width=0.45\textwidth]{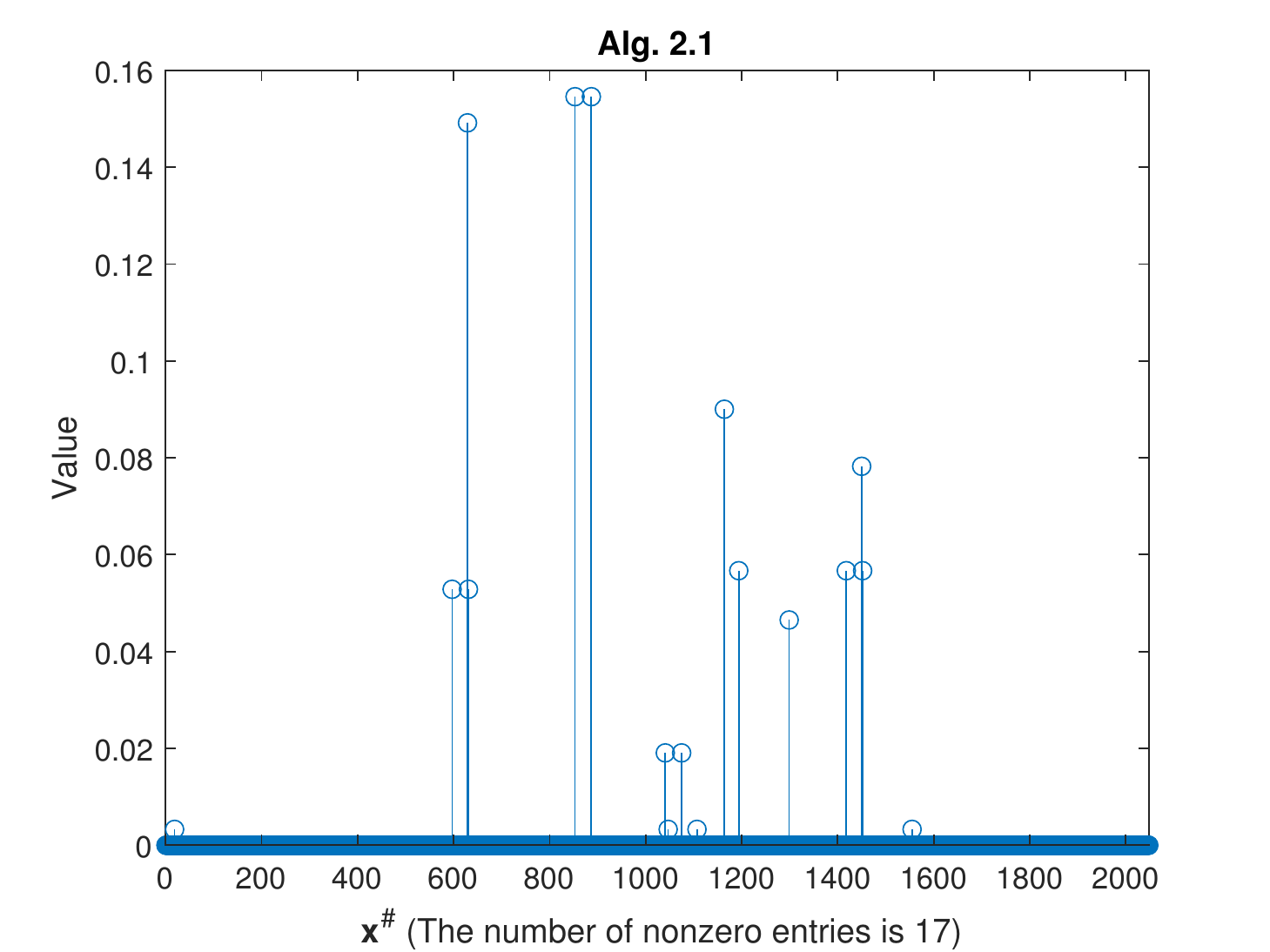}
 \caption{The probability distribution $\bx^\#$ for Example \ref{ex54}.}  \label{fig54}
 \end{figure}
\begin{table}[!h]\renewcommand{\arraystretch}{1.0} \addtolength{\tabcolsep}{1.0pt}
\begin{center}{\small
  \begin{tabular}[c]{|c|c|c|c|c|}     \hline
& {\tt nnz.} & {\tt kkt.} & {\tt obj.}  & {\tt ct.}   \\ \hline
PG  &                          $1786$           &$4.9093\times 10^{-4}$  &                                         $1.8462\times 10^{-4}$                                  &$0.0129$            \\ \hline
ADMM &                          $2048$           &$1.3327\times 10^{-8}$  &                                         $2.0000\times 10^{-4}$                                  &$0.0589$            \\ \hline
Alg. \ref{pgm} with fixed~$\lambda$ &     $8$            &$2.2379\times 10^{-2}$  &                                         $4.3463\times 10^{-4} $                                  &$1.0068$            \\ \hline
Alg. \ref{pgm} & $17$                 &$1.9996\times 10^{-2}$  &                                         $3.6000 \times 10^{-4}$                                  &$3.0860$            \\ \hline
\end{tabular} }
\end{center}
\caption{Numerical results for Example \ref{ex54}.} \label{table54-1}
\end{table}

Finally, we consider the following numerical example on the construction of a transition probability
matrix from a given stationary distribution \cite{CJ12}.
\begin{example}\label{ex56}
Construct a sparse transition probability matrix from  the given  stationary distribution vector:
\[
\bfd_1=(0.1282,0.2139,0.0667,0.1766,0.1758,0.0887,0.1324,0.0177)^T.
\]
\end{example}

We apply Algorithm \ref{pgm} to Example \ref{ex56} with  ${\tt tol}=10^{-6}$ and ${\tt ITmax}=6000$, where ${\tt rres.}=\|\bfd_1 X^\#-\bfd_1\|/\|\bfd_1\|$ denotes the relative residual  for the computed solution $X^\#$. Here, we set $\delta_2=10^{-5}$ and $\rho_3=0.95$ and the other parameters are set as above.
Then the computed  transition probability matrix via the PG method is given by
$$ X^\#=
\left(
\begin{array}{cccccccc}
    0.1293  &  0.2004  &  0.0782 &   0.1695   & 0.1688&    0.0965  &  0.1328 &   0.0246\\
    0.1261  &  0.2448  &  0.0409  &  0.1931  &  0.1920 &   0.0714 &   0.1319 &        0\\
    0.1272  &  0.1642  &  0.1007  &  0.1481  &  0.1478&    0.1102 &   0.1290 &   0.0727\\
    0.1290  &  0.2270  &  0.0587  &  0.1843  &  0.1834  &  0.0838 &   0.1338 &        0\\
    0.1291   & 0.2266  &  0.0590  &  0.1841   & 0.1832 &   0.0841  &  0.1338 &        0\\
    0.1280   & 0.1772  &  0.0926  &  0.1558  &  0.1553 &   0.1053   & 0.1304  &  0.0555\\
    0.1294  &  0.2029  &  0.0767  &  0.1709   & 0.1702  &  0.0956  &  0.1330  &  0.0213\\
    0.1256 &   0.1354   & 0.1185  &  0.1311   & 0.1310 &   0.1211   & 0.1261&    0.1111
             \end{array}
\right)$$
while the computed  transition probability matrix via  Algorithm \ref{pgm} with fixed $\la=10^{-3}$ is given by
$$  X^\#=
\left(
\begin{array}{cccccccc}
         0   &   1.0000  &       0     &    0     &    0   &      0   &      0     &    0\\
    0.3676 &        0   &      0    &     0     &    0     &    0   & 0.6324    &     0\\
         0   &      0      &    1.0000 &        0  &      0     &    0    &     0   &      0\\
    0.2842&         0   &      0     &    0   & 0.7158    &     0     &    0    &     0\\
         0   &      0      &   0   & 0.7005  &  0.2995    &     0    &     0    &     0\\
         0   &      0      &   0    &     0     &    0   & 1.0000   &      0   &      0\\
         0  &   0.5547   &      0  &  0.4453   &     0   &      0     &    0     &    0\\
         0 &   1.0000     &    0    &     0    &     0      &   0     &    0      &   0
 \end{array}
\right)$$
and the computed  transition probability matrices via  Algorithm \ref{pgm}  with  $\lambda_0=1.0\times 10^{-3}$ and $\lambda_0=5.0\times 10^{-4}$ are respectively given by
$$  X^\#=
\left(
\begin{array}{cccccccc}
          0  &  1.0000  &       0 &        0 &        0  &       0  &       0     &    0  \\
     0.3644  &       0  &       0 &        0 &        0  &       0  &  0.6356     &    0 \\
          0  &       0  &  1.0000 &        0 &        0  &       0  &       0     &    0 \\
     0.3046  &       0  &       0 &   0.2240 &   0.4714  &       0  &       0     &    0 \\
          0  &       0  &       0 &   0.4534 &   0.5466  &       0  &       0     &    0 \\
          0  &       0  &       0 &        0 &        0  &  1.0000  &       0     &    0 \\
          0  &  0.5403  &       0 &   0.4597 &        0  &       0  &       0     &    0 \\
          0  &  1.0000  &       0 &        0 &        0  &       0  &       0     &    0
\end{array}
\right),$$
$$ X^\#=
\left(
\begin{array}{cccccccc}
     0            &0.3081     &    0       &  0           & 0               &0.6919   &      0        &  0   \\
     0.3811   &      0        & 0           &0            & 0                 & 0          &0.6189     &    0\\
   0               & 0.5008    &0.4992   &      0     &    0               &0          &0               & 0  \\
   0.2644      &   0           & 0           &0.3018  &  0.4339        & 0         &0                &0\\
     0              & 0            &  0           &0.4358  &  0.5642         &0        & 0                & 0\\
   0                 &0.6233    &0.3767    &     0       &  0                 & 0      &   0              &0\\
    0               &0.6474      &   0           &0.3526    &     0            & 0       &  0              & 0\\
    0                &0                &0            &  0         &0                    &  0        & 0    &1.0000
 \end{array}
\right).$$
\begin{table}[!h]\renewcommand{\arraystretch}{1.0} \addtolength{\tabcolsep}{1.0pt}
\begin{center}{\small
  \begin{tabular}[c]{|c|c|c|c|}     \hline
Alg. & {\tt rres.} & {\tt nnz.}  &{\tt ct.}  \\ \hline
PG & $2.0325\times 10^{-5}$ & $61$  & 0.0188      \\ \hline
Alg. \ref{pgm} with fixed $\lambda$ &$5.0415\times 10^{-2} $& $12$ &0.0984\\ \hline
Alg. \ref{pgm} with $\lambda_0=1.0\times 10^{-3}$ & $4.9338\times 10^{-2}$ & $13$ & 0.0896  \\   \hline
Alg. \ref{pgm} with $\lambda_0=5.0\times 10^{-4}$ & $3.7022\times 10^{-5}$ & $16$ & 0.2291  \\   \hline
\end{tabular}}
\end{center}
\caption{Numerical results for Example \ref{ex56}.}  \label{table56}
\end{table}

The numerical results for  Example  \ref{ex56} are listed in Table \ref{table56}. From Table \ref{table56}, we can observe  that the computed solution via Algorithm \ref{pgm} with fixed/varied regularized parameter is much sparser than the PG method. We  point out that, for Algorithm \ref{pgm} with varied regularized parameter, a good tradoff between sparsity and residual can be obtained if an initial guess of the regularized parameter is selected appropriately.

\section{Concluding remarks}\label{sec6}
In this paper,  we have considered the sparse least squares regression problem with probabilistic simplex constraint, which is reformulated as a $\ell_1$ regularized minimization problem over the unit sphere. Then a geometric proximal gradient method is proposed for solving the  regularized problem. The global convergence of the proposed method is established under some mild assumptions. In each iteration of our method, we have derived the explicit expression of the global minimizer of the sum of the linearization of the smooth part at the current iterate, the regularized function, and a quadratic proximal term over the unit sphere. Numerical experiments demonstrate the effectiveness of the proposed geometric algorithm.

\vspace{2mm}
\noindent {\bf Acknowledgments} The research of Z.-J. Bai was partially supported by the National Natural Science Foundation of China (No. 11671337).

\begin{appendices}
\section{}\label{app}
In this appendix, we give some preliminary results on subgradients of nonsmooth functions and Kurdyka-{\L}ojasiewicz (KL) property. We first recall the definition of lower semicontinuity in \cite{RW98,R70}.
\begin{definition}\label{app:lsc}
Let $h$ be a function from $\Rn$ to $[-\infty, \infty]$. Then $h$ is  proper if $\dom~h:=\{\ba\in\Rn\; | \; h(\ba)<\infty\}\neq\emptyset$ and $h(\ba)>-\infty$ for all $\ba\in\Rn$. Moreover, $h$ is lower semicontinuous (lsc) at $\bar{\ba}\in \Rn$ if
\[
\liminf_{\ba\to\bar{\ba}}h(\ba)\ge h(\bar{\ba}).
\]
and lower semicontinuous on $\Rn$ if it is lsc for every $\bar{\ba}\in \Rn$.
\end{definition}

Next, we recall some subdifferentials (subgradients) for nonsmooth functions in \cite{M06,RW98}.
\begin{definition}
Let $h:\Rn\to [-\infty, +\infty]$ be a proper lsc function. Then, the set
\[
\widehat{\partial} h(\bar{\ba}):=\left\{\br\in\Rn \; |\; \liminf_{\ba\to\bar{\ba}}\frac{h(\ba)-h(\bar{\ba})-\langle \br,\ba-\bar{\ba}\rangle }{\|\ba-\bar{\ba}\|}\geq 0\right\}.
\]
is the presubdifferential or Fr\'{e}chet subdifferential of $h$ at $\bar{\ba}\in \dom~h$ and we set $\widehat{\partial} h(\bar{\ba}):=\emptyset$ if $\bar{\ba}\notin \dom~h$. Moreover, the set
\BE\label{def:lsub}
\partial h(\bar{\ba}):=\left\{\br\in\Rn~|\mbox{~$\exists \ba^k\to\bar{\ba}$, $h(\ba^k)\to h(\bar{\ba})$ and $\br^k\in \widehat{\partial} h(\ba^k)\to\br$ as $k\to \infty$}\right\}
\EE
is the limiting subdifferential of $h$ at $\bar{\ba}\in\Rn$.
\end{definition}

As noted in \cite[Theorem 8.6]{RW98}, we know that, for each  $\bar{\ba}\in \dom~g$, $\widehat{\partial} h(\bar{\ba})\subset\partial h(\bar{\ba})$, where $\widehat{\partial} h(\bar{\ba})$ is convex and closed while  $\partial h(\bar{\ba})$ is closed. If $\bar{\ba}\in \Rn$ is a minimizer of $h$, then ${\bf 0}\in\partial h(\bar{\ba})$ and $\bar{\ba}$ is a critical point of $h$.

On the partial subdifferential of a nonsmooth function $F$ defined in \eqref{def:fg}, we have the following result from \cite{AB10,BS14,RW98}.
\begin{lemma}\label{app:ps}
Let $F$ be defined in  \eqref{def:fg}. Then for all $(\la,\by)$ with $\la>0$ and $\by\in\Rn$, we have
\[
\partial_\by F(\la,\by)=\{\nabla f(\by)+\nabla_\by g(\la,\by)+\partial\chi_{\cs^{n-1}}(\by)\}.
\]
\end{lemma}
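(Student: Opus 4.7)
The plan is to derive the formula by two successive applications of the sum rule for the limiting subdifferential, peeling off the smooth and nonsmooth pieces one at a time. Note first that the statement as written must read $\partial_\by g(\lambda,\by)$ in place of $\nabla_\by g(\lambda,\by)$, since $g(\lambda,\by)=\lambda\|\by\|_1$ is nondifferentiable; in fact $\partial_\by g(\lambda,\by)=\lambda\,\partial\|\by\|_1$ is the familiar componentwise sign subdifferential.

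First I would observe that $f(\by)=\tfrac12\|A(\by\odot\by)-\bb\|^2$ is a polynomial in the entries of $\by$, hence $C^1$ on all of $\Rn$, with gradient given by \eqref{f:grad}. Setting $h(\by):=g(\lambda,\by)+\chi_{\cs^{n-1}}(\by)$, which is proper lsc (since $\|\cdot\|_1$ is continuous and $\chi_{\cs^{n-1}}$ is proper lsc as already noted in Section~\ref{sec2}), I would invoke the exact sum rule for the limiting subdifferential of a $C^1$ function plus a proper lsc function (see, e.g., Rockafellar--Wets \cite{RW98}, Exercise~8.8, or Mordukhovich \cite{M06}, Proposition~1.107) to conclude
\[
\partial_\by F(\lambda,\by)=\nabla f(\by)+\partial_\by h(\by).
\]

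Next I would split $h$ further. Since $g(\lambda,\cdot)=\lambda\|\cdot\|_1$ is proper, convex, and finite-valued on $\Rn$, it is locally Lipschitz and, being convex, is subdifferentially regular at every point in the sense of Clarke. This is the key property that allows the limiting subdifferential to distribute across a sum with a nonconvex proper lsc term. Invoking the sum rule for locally Lipschitz plus lsc functions under regularity (Mordukhovich \cite{M06}, Theorem~3.36 / Corollary~2.20, or equivalently Rockafellar--Wets \cite{RW98}, Corollary~10.9), I obtain
\[
\partial_\by h(\by)=\partial_\by g(\lambda,\by)+\partial\chi_{\cs^{n-1}}(\by),
\]
and combining this with the previous identity yields the claim.

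The main obstacle is the second step, because $\cs^{n-1}$ is not convex and hence $\chi_{\cs^{n-1}}$ is not a convex indicator; one cannot simply appeal to the convex sum rule or to Moreau--Rockafellar. The argument therefore hinges on the regularity of $g(\lambda,\cdot)$ (equivalently, on a qualification condition $\partial^\infty g(\lambda,\by)\cap(-\partial^\infty\chi_{\cs^{n-1}}(\by))=\{0\}$, which holds trivially since $g$ is finite-valued and so its horizon subdifferential is $\{0\}$). Once this is recognized, the remainder of the argument is a direct invocation of the calculus rules quoted above.
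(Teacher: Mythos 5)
The paper itself does not prove this lemma: it is stated as a result imported from \cite{AB10,BS14,RW98} with no argument, so there is no ``paper proof'' to compare against and your two-stage sum-rule derivation is the natural way to supply one. Your first step (peeling off the smooth term via the exact sum rule for a $C^1$ function plus a proper lsc function) is correct, as is your observation that $\nabla_\by g$ must be read as $\partial_\by g=\la\,\partial\|\cdot\|_1$, and your verification of the qualification condition via $\partial^\infty g(\la,\by)=\{{\bf 0}\}$ is exactly right.

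There is, however, one missing ingredient in the second step. The calculus rules you cite (Rockafellar--Wets, Corollary 10.9; Mordukhovich's sum rule) give, under the qualification condition, only the \emph{inclusion} $\partial_\by h(\by)\subset\partial_\by g(\la,\by)+\partial\chi_{\cs^{n-1}}(\by)$; the reverse inclusion, and hence the asserted \emph{equality}, requires subdifferential regularity of \emph{every} summand, not just of $g(\la,\cdot)$. You establish regularity of $g(\la,\cdot)$ (convexity) but never address $\chi_{\cs^{n-1}}$, and for a general nonconvex set the indicator need not be regular, in which case only ``$\subset$'' survives. The gap is easily closed here: $\cs^{n-1}$ is a $C^\infty$ embedded submanifold of $\Rn$, hence Clarke regular at each of its points (see \cite[Example 6.8]{RW98}), so $\partial\chi_{\cs^{n-1}}(\by)=\widehat{\partial}\chi_{\cs^{n-1}}(\by)=N_{\cs^{n-1}}(\by)=\{\tau\by:\tau\in\R\}$ for $\by\in\cs^{n-1}$, and the equality case of Corollary 10.9 applies (the case $\by\notin\cs^{n-1}$ being trivial, both sides then being empty). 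You should state this explicitly; as written, the cited theorems do not deliver the equality you claim.
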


We now recall the Kurdyka-{\L}ojasiewicz (KL) property for a nonsmooth function \cite{AB10,BS14}.
\begin{definition}\label{app:kli} (Kurdyka-{\L}ojasiewicz~property)
Let $h:\Rn\to [-\infty, +\infty]$ be a proper lsc function. Then $h$ is said to have the  Kurdyka-{\L}ojasiewicz property at $\bar{\ba}\in\dom~\partial h:=\{\ba\in\Rn\; |\; $ $\partial h(\ba)\neq\emptyset\}$ if there exist $\mu\in (0,+\infty]$, a neighborhood  $\cb$ of $\bar{\ba}$ and a function $\xi:[0,\mu)\to \R_+$ such that $\xi$ is concave and continuously differentiable on $(0,\mu)$ and continuous at $0$ with $\xi(0)=0$ and $\xi'(x)>0$ for all
$x\in(0,\mu)$, and
the  Kurdyka-{\L}ojasiewicz inequality
\[
\xi'\big(h(\ba)-h(\bar{\ba})\big) \dist({\bf 0}, \partial h(\bar{\ba}))\geq 1
\]
holds for all $\ba\in \cb\cap\{\ba\in\Rn\; | \; h(\bar{\ba})<h(\ba)<h(\bar{\ba})+\mu\}$, where $\dist({\bf 0}, \partial h(\bar{\ba})):=\inf\{\|\br\|\; | \; \br\in\partial h(\bar{\ba})\}$.
\end{definition}

Finally, we recall the general result from \cite[Lemma 6]{BS14} on the KL  property for a nonsmooth function.
\begin{lemma}\label{app:gkl}
Let $h:\Rn\to [-\infty, +\infty]$ be a proper lsc function. Suppose $h$ is constant on a compact set $\cc\subset \Rn$. If $h$ has the KL property at each point of $\cc$. Then, there exist two constants $\mu_1>0$ and $\mu_2>0$ and a function  $\xi$ as defined in Definition \ref{app:kli} such that
\[
\xi'\big(h(\ba)-h(\bar{\ba})\big) \dist({\bf 0}, \partial h(\bar{\ba}))\geq 1,
\]
for all $\bar{\ba}$ in $\cc$ and all $\ba\in\{\ba\in\Rn \; |\; \dist(\ba,\cc)<\mu_1\} \cap\{\ba\in\Rn \; |\; h(\bar{\ba})<h(\ba)<h(\bar{\ba})+\mu_2\}$.

\end{lemma}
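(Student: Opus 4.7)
The plan is to follow a standard Kurdyka-{\L}ojasiewicz-based finite-length argument in the spirit of \cite[Theorem 1]{BS14}, exploiting the three pillars already in place: the sufficient-decrease property (Lemma \ref{lem:mon} ii)), the relative-error bound on the subgradient (Lemma \ref{lem:grad}), and the uniformized KL inequality on the accumulation set (Lemma \ref{app:gkl} combined with Lemma \ref{lem:ly0}). Under the hypothesis that $\la_k=\la_*$ for all $k$ sufficiently large, we may restrict attention to such indices, so that $F(\la_k,\by^k)=F(\la_*,\by^k)$ and Lemma \ref{lem:mon} ii) becomes a clean descent inequality for the single function $F(\la_*,\cdot)$.

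First I would dispatch a trivial case: if there exists $\hat{k}$ with $F(\la_{\hat{k}},\by^{\hat{k}})=F(\la_*,\by^*)$ for some accumulation point $\by^*$, then the descent inequality forces $\by^{\hat{k}+1}=\by^{\hat{k}}$, and by induction the sequence stabilizes, trivially converging to $\by^*$, which is a critical point by Theorem \ref{thm:gc}. Otherwise, Lemma \ref{lem:mon} ii) gives strict decrease with $F(\la_k,\by^k)>F(\la_*,\by^*)$ for all $k$. Since $\cl(\by^0)$ is compact and $F(\la_*,\cdot)$ is constant on it (Lemma \ref{lem:ly0}) and equals $F(\la_*,\by^*)$, I would apply Lemma \ref{app:gkl} with $\cc=\cl(\by^0)$ and $h=F(\la_*,\cdot)$ to obtain a single desingularizing function $\xi$ such that
\[
\xi'\bigl(F(\la_*,\by^k)-F(\la_*,\by^*)\bigr)\,\dist(0,\partial_\by F(\la_*,\by^k))\ge 1
\]
for all $k$ sufficiently large (using that $\dist(\by^k,\cl(\by^0))\to 0$ and $F(\la_*,\by^k)\downarrow F(\la_*,\by^*)$).

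The core computation is then to combine this KL inequality with Lemma \ref{lem:grad}, which yields $\dist(0,\partial_\by F(\la_*,\by^k))\le(L_f+1/\gamma_1)\|\by^k-\by^{k-1}\|$, producing
\[
\xi'\bigl(F(\la_*,\by^k)-F(\la_*,\by^*)\bigr)\ge \frac{\gamma_1}{1+\gamma_1 L_f}\,\|\by^k-\by^{k-1}\|^{-1}.
\]
Using concavity of $\xi$ together with the descent inequality $F(\la_*,\by^k)-F(\la_*,\by^{k+1})\ge \tfrac{\gamma_2}{2}\|\by^{k+1}-\by^k\|^2$, the telescoping quantity $\Theta_{r,s}:=\xi(F(\la_*,\by^r)-F(\la_*,\by^*))-\xi(F(\la_*,\by^s)-F(\la_*,\by^*))$ satisfies $\|\by^{k+1}-\by^k\|^2\le \Psi\,\Theta_{k,k+1}\,\|\by^k-\by^{k-1}\|$ with $\Psi=2(1+\gamma_1 L_f)/(\gamma_2\gamma_1)$. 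Applying $2\sqrt{ab}\le a+b$ gives the key one-step recursion
\[
2\|\by^{k+1}-\by^k\|\le \|\by^k-\by^{k-1}\|+\Psi\,\Theta_{k,k+1}.
\]

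Summing this recursion from $\ell+1$ to $k$, the left-hand side almost cancels half of the first term on the right and the $\Theta$-terms telescope to $\Psi\,\Theta_{\ell+1,k+1}\le \Psi\,\xi(F(\la_*,\by^{\ell+1})-F(\la_*,\by^*))$, which is uniformly bounded in $k$. Letting $k\to\infty$ shows $\sum_k\|\by^{k+1}-\by^k\|<\infty$, so $\{\by^k\}$ is Cauchy, hence convergent; since its limit is necessarily the accumulation point $\by^*$, which Theorem \ref{thm:gc} identifies as a critical point of $F(\la_*,\cdot)$, the proof is complete. The main delicate point is bookkeeping the index where $\la_k=\la_*$ kicks in so that the descent inequality, the subgradient bound, and the KL inequality all refer to the same function $F(\la_*,\cdot)$; once that alignment is secured, everything reduces to the classical Bolte-Sabach-Teboulle machinery.
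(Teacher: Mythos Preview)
Your proposal does not address the stated lemma at all. Lemma~\ref{app:gkl} is the \emph{uniformized KL inequality}: given a proper lsc function $h$ that is constant on a compact set $\cc$ and has the KL property at each point of $\cc$, one must produce a single desingularizing function $\xi$ and uniform constants $\mu_1,\mu_2>0$ that work simultaneously for every $\bar{\ba}\in\cc$. The paper does not prove this lemma; it simply quotes it from \cite[Lemma~6]{BS14}. A proof would proceed by covering the compact set $\cc$ with finitely many KL neighborhoods and patching together the finitely many local desingularizing functions (e.g., by taking a suitable sum or minimum and shrinking the constants).

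What you have written is instead a proof of the \emph{convergence theorem} (the unnamed theorem following Lemma~\ref{lem:ly0}), in which Lemma~\ref{app:gkl} appears only as a black-box ingredient. Your argument is essentially identical to the paper's proof of that convergence theorem, but it is not a proof of Lemma~\ref{app:gkl}: you invoke the lemma (``I would apply Lemma~\ref{app:gkl} with $\cc=\cl(\by^0)$\ldots'') rather than establish it. You need to either supply the compactness/finite-subcover argument that actually proves the uniformized KL property, or, as the paper does, simply cite \cite[Lemma~6]{BS14}.
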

\end{appendices}
\end{document}